\newdefinition{definition}{Definition}[section]
\newtheorem{proposition}[definition]{Proposition}
\newtheorem{lemma}[definition]{Lemma}
\newtheorem{theorem}[definition]{Theorem}
\newtheorem{corollary}[definition]{Corollary}
\def \calh {{\mathcal H}}
\def \hatphi {{\hat{\varphi}}}
\journal{Discrete Mathematics}
\begin{document}

\newcommand{\comment}[1]{}

\begin{frontmatter}



\title{The chromatic discrepancy of graphs}


\author{N.R. Aravind}
\author{Subrahmanyam Kalyanasundaram}
\author{R.B. Sandeep\fnref{support}}
\fntext[support]{Supported by TCS Research Scholarship}
\author{Naveen Sivadasan}

\address{Department of Computer Science \& Engineering\\
Indian Institute of Technology Hyderabad, INDIA\\
\{\texttt{aravind, subruk, cs12p0001, nsivadasan}\}\makeatletter@\makeatother \texttt{iith.ac.in}}

\begin{abstract}

For a proper vertex coloring $c$ of a graph $G$, let $\varphi_c(G)$ denote the maximum, over all induced subgraphs $H$ of $G$, the difference between the chromatic number $\chi(H)$ and the number of colors used by $c$ to color $H$. We define the \textbf{chromatic discrepancy} of a graph $G$, denoted by $\varphi(G)$, to be the minimum $\varphi_c(G)$, over all proper colorings $c$ of $G$. If $H$ is restricted to only connected induced subgraphs, we denote the corresponding parameter by $\hatphi(G)$. These parameters are aimed at studying graph colorings that  use as few colors as possible in a graph and all its induced subgraphs. We study the parameters $\varphi(G)$ and $\hatphi(G)$ and obtain bounds on them. We obtain general bounds, as well as bounds for certain special classes of graphs including random graphs. We provide structural characterizations of graphs with $\varphi(G) = 0$ and graphs with $\hatphi(G) = 0$.
We also show that computing these parameters is NP-hard.

\end{abstract}

\begin{keyword}

graph theory\sep graph coloring\sep chromatic discrepancy\sep random graphs \sep local coloring
\end{keyword}

\end{frontmatter}



\section{Introduction}
\label{section:intro}
Consider a proper vertex coloring of a graph $G$, i.e., no two adjacent vertices get the same color.
Let $\chi(G)$ denote the \emph{chromatic number} of $G$.
It is natural to insist that every induced subgraph of $G$ also is colored with as few colors as possible.
In particular, for a coloring $c$ of $G$, let $\varphi_{c}(G)$ denote the maximum of, over all induced subgraphs $H$ of $G$, the difference between $\chi(H)$ and the number of colors used by $c$ to color $H$.
We define $\varphi(G)$, called the \emph{chromatic discrepancy} of $G$, as the minimum $\varphi_{c}(G)$ over all proper colorings $c$ of $G$.
We obtain several bounds on $\varphi(G)$ and study its relation to other graph parameters.

We consider only finite and simple undirected graphs. We follow basic definitions and
notations in \cite{douglas1996west}. For a graph $G$, we use $V(G)$, $E(G)$ and $|G|$ to denote the set of vertices of $G$, the set of edges of $G$ and the number of vertices in $G$, respectively. Given a graph $G$ and an induced subgraph $H$ of $G$, we use $G \setminus H$ to denote the induced subgraph of $G$ on the vertices $V(G) - V(H)$. A star graph is a graph isomorphic to $K_{1,n-1}$.
The chromatic discrepancy parameters discussed are formally defined below.

\begin{definition}
Let $\calh$ be the set of all induced subgraphs and $C$ be the set of all proper
colorings of $G$.
We define $\varphi_c(G)$, for $c \in C$,
as
$\underset{H\in\calh}{\max}(|c(H)|-\chi(H))$, where $c(H)$ is the set of colors used by $c$ on $H$.
The \textbf{chromatic discrepancy} of $G$, denoted by $\varphi(G)$, is defined as
$\underset{c\in C}{\min}\,\varphi_c(G)$. When $\calh$ is restricted to the set of all connected induced subgraphs of $G$, then the corresponding parameter is denoted by $\hatphi(G)$.
\end{definition}

Note that if $G$ is a disconnected graph then $\hatphi(G)$ is the maximum of $\hatphi(G_i)$ where $G_{i}s$ are the connected components of $G$.
It is clear from  the definition that
$\varphi(G) \ge \hatphi(G)$. It is also clear from the definition that both $\varphi$ and $\hatphi$ are monotonically non-decreasing, i.e.,
$\varphi(H)\leq \varphi(G)$ and $\hatphi(H)\leq \hatphi(G)$ for any induced subgraph $H$ of $G$.
Observe that if $G$ is a complete graph, then $\varphi(G) = \hatphi(G) = 0$ and if $G$ is an odd cycle of length more than $7$,
then\footnote{Let $G$ be an odd cycle $u_0u_1\ldots u_{n-1}u_0$ of length at least 9. Consider any proper coloring $c$ of $G$.
Without loss of generality, we may assume that $u_0, u_1$ and $u_2$ are assigned 3 distinct colors. Consider two vertices
$v, w\in S = \{u_4, u_5,\ldots, u_{n-2}\}$, such that $(v,w) \notin E(G)$ and $c(v) \neq c(w)$. These two vertices exist since $|S| \geq 4$.
Let $x\in \{u_0, u_1, u_2\}$ such that $c(x) \neq c(v)$ and $c(x) \neq c(w)$. Then $H = \{x, v, w\}$ is an
independent set in $G$ in which each vertex has a different colour. For this independent set $H$, we have $\varphi_c(H) = 2$.
}
$\varphi(G) = 2$ and $\hatphi(G) = 1$.
The smallest graph $G$ where $\varphi$ and $\hatphi$ differ is $K_2 \cup K_1$, i.e., the disjoint union of $K_2$ and $K_1$. For this graph, $\varphi(G)=1$ and $\hatphi(G)=0$.

The notion of \emph{local chromatic number} $\psi(G)$ of a graph $G$ was introduced by Erd{\"o}s et al. in \cite{erdos1986coloring} and is defined as the minimum over all proper colorings of $G$, the maximum number of colors present in the closed neighborhoods of vertices in $G$. It was shown in \cite{erdos1986coloring} that there are graphs with local chromatic number $3$ and chromatic number greater than $k$ for any positive integer $k$.
In Section~\ref{section:lb}, we obtain a lower bound on chromatic discrepancy of triangle-free graphs using local chromatic number. As the local chromatic number is bounded from below by the \emph{fractional chromatic number} (see \cite{korner2005local}), we obtain a lower bound on chromatic discrepancy using fractional chromatic number as well. Another related notion is \emph{perfect coloring} of a graph defined in \cite{sandeep2011perfectly}. A proper coloring
of a graph $G$ is known as perfect coloring if every connected induced subgraph $H$ of $G$ uses exactly $\omega(H)$ number of colors, where $\omega(H)$ is the clique number of $H$. We prove in Section~\ref{section:str} that the graphs which admit such a coloring are exactly the graphs for which $\hatphi(G)=0$. We remark that our parameter $\varphi$ differs from the \emph{hypergraph (combinatorial) discrepancy} \cite{matousek1999geometric} which measures the minimum over all two-colorings of a hypergraph, the maximum imbalance between the cardinality of the color classes of vertices in the hyperedges of the hypergraph.

Finding a coloring that achieves chromatic discrepancy has potential applications in  channel allocation problems in wireless networks \cite{ramachandran2006interference}.
The wireless network can be represented as a graph $G$ whose vertices correspond to the various network devices and the edges correspond to communication links between pairs of devices. The goal is to assign channels to communication links without causing any radio interference.
A conflict graph $G_C$ of $G$ has vertices corresponding to the edges in $G$ and an edge is present between two vertices in $G_C$ if assigning the same channel to the corresponding communication links can lead to a radio interference.
Allocating channels to the communication links is solved using a vertex coloring of $G_C$, where the set of colors correspond to the set of channels used in the network.
Each channel uses a separate frequency band from the available communication band. By reducing the total number of channels (colors), each channel can be assigned a larger bandwidth for faster communication.
In addition, reducing the number of different channels (colors) used in subgraphs corresponding to various geographic regions can potentially have two benefits: (a) communication links in these regions can use larger bandwidth and (b) more channels (colors) are available for channel allocation in other co-located wireless networks in these regions.

\section{Our Results}
We give upper bounds on $\varphi(G)$ in terms of chromatic number $\chi(G)$ and independence number $\alpha(G)$.
We show that for a graph on $n$ vertices, $\varphi(G)$ is at most $ \min\{n/3, ~\chi(G) (1 - 1/\alpha(G)), ~n - \chi(G)\}$. This upper bound is tight for complete graphs and graphs of the form $K_t \cup K_{2t}$.
We show that for any graph $G$,  $\varphi(G)$ is lower bounded by  $(\chi(G)-\omega(G))/2$, where $\omega(G)$ is the clique number. As a consequence, $\varphi(G) \ge \chi(G)/2 -1 $ for triangle free graphs.

For Mycielski graphs $M_k$ of order $k$, we show that $\varphi(M_k) = \hatphi(M_k) = \chi(M_k) - 2  = k - 2$. Hence, for Mycielski graphs, the above upper bound is tight up to additive constant and the above lower bound is tight up to multiplicative constant. We also show that for triangle free graphs, $\varphi(G) \ge \hatphi(G) \ge \psi(G)-2$ where $\psi(G)$ is the local chromatic number of $G$.  We obtain a lower bound on the chromatic discrepancy parameters for random graphs under the $G(n, p)$ model. We show that for $2\log (n)/n < p < 1/\log^2(n)$ and for a constant $C > 0$, a.a.s., $\varphi(G) \ge \hatphi(G) \ge \chi(G) (1 - C/\log(np))$.

We provide structural characterization for graphs with $\hatphi(G) = 0$ and graphs with $\varphi(G) = 0$. We show that graphs with $\hatphi(G) = 0$ are exactly the class of paw-free perfect graphs (See Figure \ref{fig:paw} for paw graph)  and graphs with $\varphi(G) = 0$ are exactly the class of perfect graphs which are complete multipartite.
In general, $\varphi(G)$ and $\hatphi(G)$ of perfect graphs need not be bounded. We show that there are perfect graphs with arbitrarily large $\varphi(G)$ and similarly there are perfect graphs with arbitrarily large $\hatphi(G)$.

We also obtain bound on the gap between $\varphi(G)$ and $\hatphi(G)$. We show that $\varphi(G) \le \hatphi(G) + \alpha(G) - 1$ for any connected graph $G$.
It is natural to ask whether there exists an optimal coloring of $G$ that achieves $\varphi(G)$ or $\hatphi(G)$.
We show a class of graphs for which $\varphi_c(G)$ and $\hatphi_c(G)$ obtained by any optimal coloring $c$ is arbitrarily large compared to $\varphi(G)$ and $\hatphi(G)$. From the computational aspect we show that computing $\varphi(G)$ and $\hatphi(G)$ are NP-hard.

\section{Upper Bounds}\label{section:ub}
In this section, we derive upper bounds for the chromatic discrepancy parameters $\varphi(G)$ and $\hatphi(G)$ in terms of other graph parameters.

In an optimal coloring $c$ of a graph $G$, $\varphi_c(G)$ is maximized if there is an independent set on $\chi(G)$ vertices of distinct colors. Similarly, $\hatphi_c(G)$ is maximized if there is a connected induced 2-chromatic subgraph containing vertices of all the colors. Thus we have the following proposition.
\begin{proposition}\label{prop:trivialubound}
For any graph $G$, $\varphi(G)\leq \chi(G)-1$ and for any graph $G$ with at least one edge, $\hatphi(G)\leq \chi(G)-2$\,.
\end{proposition}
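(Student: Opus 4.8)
The plan is to exhibit a single coloring that already meets the claimed bound, namely any optimal proper coloring of $G$. Fix a proper coloring $c$ of $G$ that uses exactly $\chi(G)$ colors. For the first inequality, note that for every induced subgraph $H$ we trivially have $|c(H)|\le \chi(G)$, since $c$ uses only $\chi(G)$ colors on all of $G$. If $H$ is nonempty then $\chi(H)\ge 1$, so $|c(H)|-\chi(H)\le \chi(G)-1$; if $H$ is empty then $|c(H)|-\chi(H)=0\le \chi(G)-1$. Taking the maximum over all induced subgraphs $H$ gives $\varphi_c(G)\le \chi(G)-1$, and hence $\varphi(G)=\min_{c'} \varphi_{c'}(G)\le \chi(G)-1$.

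For the second inequality, use the same coloring $c$ but now restrict attention to connected induced subgraphs $H$. The key observation is a case split on whether $H$ contains an edge. If $H$ has at least one edge, then $\chi(H)\ge 2$, so $|c(H)|-\chi(H)\le \chi(G)-2$. If $H$ has no edge, then since $H$ is connected it must be a single vertex, giving $|c(H)|=\chi(H)=1$ and contribution $0$. Because $G$ has at least one edge we have $\chi(G)\ge 2$, so $\chi(G)-2\ge 0$ and the first case dominates. Thus $\hatphi_c(G)\le \chi(G)-2$, and therefore $\hatphi(G)\le \chi(G)-2$.

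The only subtlety worth spelling out is the handling of the degenerate subgraphs — the empty induced subgraph for $\varphi$, and the edgeless (hence single-vertex) connected induced subgraphs for $\hatphi$ — which is exactly why the edge hypothesis is needed to keep the bound $\chi(G)-2$ nonnegative. Everything else is just the two trivial bounds $|c(H)|\le \chi(G)$ and $\chi(H)\ge 1$ (respectively $\chi(H)\ge 2$ when $H$ has an edge), so there is no real obstacle here; the content of the statement is merely that an optimal coloring of $G$ is already as good as one could hope when the worst-case witness is an independent set (for $\varphi$) or a connected bipartite subgraph meeting all colors (for $\hatphi$).
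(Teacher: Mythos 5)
Your proof is correct and follows essentially the same route as the paper: take an optimal coloring $c$, so $|c(H)|\le\chi(G)$ for every induced $H$, and use $\chi(H)\ge 1$ (respectively $\chi(H)\ge 2$ for a connected $H$ containing an edge) to get the two bounds; the paper states exactly this observation, noting the worst cases are an independent set of $\chi(G)$ distinct colors and a connected $2$-chromatic subgraph meeting all colors. Your explicit handling of the degenerate single-vertex (and empty) subgraphs is a harmless elaboration of the same argument.
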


The following proposition is widely used in the rest of the paper.
\begin{lemma}\label{lem:ind}
Let $G$ be any graph and let $I$ be an independent set in $G$.
Then:
\[ \varphi(G) \leq \varphi(G \setminus I) + 1\,. \]
In particular, if $H$ is an induced subgraph of $G$,
\[ \varphi(G) \leq \varphi(H) + \chi(G \setminus H)\,. \]
\begin{proof}
Consider any proper coloring $c$ of $G\setminus I$. Give a new color to vertices in $I$. Let the coloring obtained for $G$ be $c^\prime$.
It is easy to see that $\varphi_{c^\prime}(G)\leq \varphi_c(G\setminus I)+ 1$. By choosing $c$ to be the coloring of $G\setminus I$ for which $\varphi_c(G\setminus I)$ is minimum, we get
$\varphi_{c'} (G) \leq \varphi(G \setminus I) + 1$. Thus we obtain $\varphi(G) \leq \varphi(G \setminus I) + 1$.
By removing each color class of an optimal coloring of $G\setminus H$ and applying the upper bound obtained above, we obtain $\varphi(G) \leq \varphi(H) + \chi(G \setminus H)$.
\end{proof}
\end{lemma}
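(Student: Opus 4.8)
The plan is to prove the first inequality directly by extending an optimal coloring of $G \setminus I$ to a coloring of $G$, and then to obtain the ``in particular'' statement by iterating the first inequality over the color classes of an optimal coloring of $G \setminus H$.

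For the first inequality, I would start with a proper coloring $c$ of $G \setminus I$ that attains $\varphi_c(G \setminus I) = \varphi(G \setminus I)$. Since $I$ is independent, assigning a single fresh color (one not used by $c$) to all of $I$ yields a proper coloring $c'$ of $G$. The key estimate is that for every induced subgraph $H$ of $G$, writing $H' = H \setminus I$, one has $|c'(H)| \le |c(H')| + 1$ (the colors appearing on $H$ are exactly those $c$ uses on $H'$, together with at most the one new color) and $\chi(H) \ge \chi(H')$ (the chromatic number is monotone under taking induced subgraphs). Subtracting gives $|c'(H)| - \chi(H) \le |c(H')| - \chi(H') + 1 \le \varphi_c(G \setminus I) + 1 = \varphi(G \setminus I) + 1$, and taking the maximum over all induced $H$ gives $\varphi(G) \le \varphi_{c'}(G) \le \varphi(G \setminus I) + 1$.

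For the second part, I would fix an optimal proper coloring of $G \setminus H$ with color classes $I_1, \dots, I_k$, where $k = \chi(G \setminus H)$; each $I_j$ is independent in $G \setminus H$ and hence in $G$. Applying the first inequality repeatedly, peeling off one color class at a time from $G = H \cup I_1 \cup \cdots \cup I_k$, gives $\varphi(G) \le \varphi(H \cup I_1 \cup \cdots \cup I_{k-1}) + 1 \le \cdots \le \varphi(H) + k = \varphi(H) + \chi(G \setminus H)$.

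I do not anticipate a serious obstacle: the argument is essentially a bookkeeping argument, and the only points needing care are verifying that $c'$ remains proper (which is exactly where independence of $I$ is used) and that $\chi$ does not increase when restricting from $H$ to the induced subgraph $H' = H \setminus I$. A small subtlety worth spelling out is that $|c(H')|$ and $\chi(H')$ refer to the same subgraph $H'$, so the inequality $\varphi_c(G \setminus I) \ge |c(H')| - \chi(H')$ applies verbatim; this is what lets the ``$+1$'' be the only loss.
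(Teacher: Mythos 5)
Your proof is correct and follows the same strategy as the paper's: extend a coloring of $G \setminus I$ by one fresh color on $I$, then obtain the second bound by peeling off the color classes of an optimal coloring of $G \setminus H$ one at a time. You merely spell out the subgraph-by-subgraph estimate ($|c'(H)| \le |c(H \setminus I)| + 1$ and $\chi(H) \ge \chi(H \setminus I)$) that the paper leaves as ``easy to see.''
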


The following lemma comes handy in proving our results.

\begin{lemma}\label{lem:colorclass}
Let $c$ be a proper vertex coloring of a graph $G$. There exists an induced subgraph $H$ of $G$ such that $H$ contains exactly one
vertex from each color class and $\varphi_c(G) = |c(H)| - \chi(H)$\,.
\begin{proof}
Consider an induced subgraph $H^\prime$ of $G$ with minimum number of vertices such that $|c({H^\prime})|-\chi(H^\prime) = \varphi_c(G).$ Clearly $H^\prime$ cannot contain more than one vertex of the same color class. Now, obtain an induced subgraph $H$ from $H^\prime$ by including
exactly one vertex from each color class which is absent in $H^\prime$. This induced subgraph has the properties stated in the lemma.
\end{proof}
\end{lemma}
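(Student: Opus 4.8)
The plan is to start from a \emph{smallest} witness for $\varphi_c(G)$ and then enlarge it so that it meets every color class exactly once, checking that neither operation changes the quantity $|c(\cdot)| - \chi(\cdot)$.

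First I would choose an induced subgraph $H'$ of $G$ of minimum order among those satisfying $|c(H')| - \chi(H') = \varphi_c(G)$; at least one such subgraph exists because $\varphi_c(G)$ is, by definition, a maximum over induced subgraphs. The first claim is that $H'$ contains at most one vertex from each color class. Indeed, suppose $u, v \in V(H')$ with $u \ne v$ and $c(u) = c(v)$. In the subgraph obtained from $H'$ by deleting $v$, the color $c(v)$ is still witnessed by $u$, so the set of colors used is unchanged, while the chromatic number can only drop since we passed to a subgraph. Hence this smaller subgraph still has $|c(\cdot)| - \chi(\cdot) \ge \varphi_c(G)$, and as $\varphi_c(G)$ is the maximum this is an equality --- contradicting the minimality of $|V(H')|$.

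Next I would build $H$ from $H'$ by adding, one vertex at a time, a representative of each color class of $c$ not yet present. If $F$ is the current subgraph and $w$ has a color absent from $F$, then $|c(F \cup \{w\})| = |c(F)| + 1$, while $\chi(F \cup \{w\}) \le \chi(F) + 1$ (extend an optimal coloring of $F$ by assigning $w$ a brand new color), so $|c(F \cup \{w\})| - \chi(F \cup \{w\}) \ge |c(F)| - \chi(F)$. Since every subgraph arising in this process is an induced subgraph of $G$, the quantity $|c(\cdot)| - \chi(\cdot)$ never exceeds $\varphi_c(G)$; being non-decreasing along the process, equal to $\varphi_c(G)$ at $H'$, and bounded above by $\varphi_c(G)$, it must stay exactly $\varphi_c(G)$, in particular at $H$. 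By construction $H$ contains exactly one vertex from each color class, which gives the lemma.

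I do not anticipate a genuine obstacle; the only points requiring care are the two monotonicity facts --- deleting a vertex whose color is duplicated leaves the color set unchanged, and adding a vertex of a fresh color raises $\chi$ by at most one --- together with the bookkeeping that every intermediate graph is induced, so that $\varphi_c(G)$ remains a valid upper bound at each step.
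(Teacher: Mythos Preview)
Your proof is correct and follows exactly the same approach as the paper's: take a minimum-order witness $H'$, argue it has at most one vertex per color class, then extend to a full transversal $H$. You have simply filled in the monotonicity details that the paper leaves to the reader with the word ``Clearly.''
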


We now give upper bounds for $\varphi(G)$, and thereby also for $\hatphi(G)$, in terms of the number of vertices $n$, chromatic number $\chi(G)$ and the independence number $\alpha(G)$ of $G$.

\begin{theorem}\label{thm:ub}
\begin{enumerate}[{(}i)]\label{thmitem:main}
\item\label{thmitem:nchi} $\varphi(G) \leq n-\chi(G)$,
\item\label{thmitem:chialpha} $\varphi(G) \leq \chi(G) \left(1-\dfrac{1}{\alpha(G)}\right)$,
\item\label{thmitem:nby3} $\varphi(G) \leq n/3$.
\end{enumerate}
\begin{proof}
Consider an optimal coloring $c$ of $G$. By Lemma~\ref{lem:colorclass}, there exists an induced subgraph $H$ of $G$ with $\chi(G)$ vertices such that $\varphi_c(G)=|c(H)|-\chi(H)=\chi(G)-\chi(H)$. Observing that  $\varphi(G) \le \varphi_c(G)$ and that $\chi(G) \leq \chi(H) + \chi(G \setminus H)$, we have

\begin{equation}\label{eq:1}
\varphi(G) \leq \chi(G)-\chi(H) \leq \chi(G \setminus H)
\end{equation}

Now (\ref{thmitem:nchi}) follows from (\ref{eq:1}) by observing that $\chi(G \setminus H)$ is upper bound by the number of vertices in $G \setminus H$ which is $n - \chi(G)$.

Since $\chi(H) \geq \dfrac{|H|}{\alpha(H)}\geq \dfrac{|H|}{\alpha(G)} = \dfrac{\chi(G)}{\alpha(G)}$, (\ref{thmitem:chialpha}) also follows from (\ref{eq:1}).

We prove (\ref{thmitem:nby3}) by induction on the number of vertices, $n$.
The statement can be easily verified when $n \leq 2$.
Consider a graph $G$ on $n$ vertices, where $n \geq 3$.
If $G$ has an independent set $I$ of size 3, then by Proposition~\ref{lem:ind}, and using the induction assumption,
$\varphi(G) \leq \varphi(G \setminus I)+1 \leq \dfrac{n-3}{3}+1 = \dfrac{n}{3}$.

Otherwise, $\alpha(G) \leq 2$ and hence from  (\ref{thmitem:chialpha}), we obtain
\begin{equation}\label{eq:3}
\varphi(G) \leq \dfrac{\chi(G)}{2}
\end{equation}

From (\ref{thmitem:nchi}) and (\ref{eq:3}), it follows that $\varphi(G) \le \min\left\{\chi(G)/2, n - \chi(G)\right\} \le n/3$.
\end{proof}
\end{theorem}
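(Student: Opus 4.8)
The plan is to establish (i) and (ii) together from one structural observation, and then to obtain (iii) by an induction that splits on the size of a maximum independent set.

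For (i) and (ii), I would fix an optimal proper coloring $c$ of $G$, so that $\varphi(G) = \varphi_c(G)$, and apply Lemma~\ref{lem:colorclass} to extract an induced subgraph $H$ that contains exactly one vertex from each of the $\chi(G)$ color classes and satisfies $\varphi_c(G) = |c(H)| - \chi(H) = \chi(G) - \chi(H)$. The key point is that $\chi$ is subadditive under splitting the vertex set, i.e. $\chi(G) \le \chi(H) + \chi(G \setminus H)$, which immediately gives $\varphi(G) \le \chi(G) - \chi(H) \le \chi(G \setminus H)$. Bound (i) then follows because $G \setminus H$ has exactly $n - \chi(G)$ vertices, so $\chi(G \setminus H) \le n - \chi(G)$. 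Bound (ii) follows because $\chi(H) \ge |V(H)|/\alpha(H) \ge |V(H)|/\alpha(G) = \chi(G)/\alpha(G)$, hence $\varphi(G) \le \chi(G) - \chi(G)/\alpha(G) = \chi(G)\bigl(1 - 1/\alpha(G)\bigr)$.

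For (iii), I would induct on $n$, checking the cases $n \le 2$ directly. For $n \ge 3$: if $G$ has an independent set $I$ of size $3$, then Lemma~\ref{lem:ind} together with the induction hypothesis gives $\varphi(G) \le \varphi(G \setminus I) + 1 \le (n-3)/3 + 1 = n/3$. Otherwise $\alpha(G) \le 2$, and part (ii) (already proved) gives $\varphi(G) \le \chi(G)/2$; combining this with part (i) yields $\varphi(G) \le \min\{\chi(G)/2,\, n - \chi(G)\}$, and this minimum, viewed as a function of $\chi(G)$, is largest when $\chi(G) = 2n/3$, where it equals $n/3$. Since $\hatphi(G) \le \varphi(G)$, all three bounds transfer to $\hatphi$ automatically.

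I do not anticipate a genuine obstacle. The only mildly delicate point is that the $\alpha(G) \le 2$ branch of (iii) cannot be closed by the induction itself — there may be no size-$3$ independent set to strip off — so it must instead be closed by balancing the two independent estimates coming from (i) and (ii). Beyond that, everything reduces to subadditivity of $\chi$ under vertex partitions and the elementary inequality $\chi \ge |V|/\alpha$.
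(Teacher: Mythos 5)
Your argument is correct and is essentially the paper's own proof: Lemma~\ref{lem:colorclass} plus subadditivity of $\chi$ under the vertex split $(H, G\setminus H)$ gives (i) and (ii), and (iii) is handled by the same induction with the split into ``there is an independent set of size $3$'' versus $\alpha(G)\le 2$, closed by balancing $\min\{\chi(G)/2,\,n-\chi(G)\}\le n/3$. The only blemish is the opening claim that an optimal ($\chi(G)$-color) coloring $c$ satisfies $\varphi(G)=\varphi_c(G)$ --- this is not justified and is in general false (cf.\ Theorem~\ref{thm:opt}); what is needed, and what you in fact use, is only $\varphi(G)\le\varphi_c(G)$.
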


If $\chi(G)$ is large, Theorem~\ref{thm:ub}(\ref{thmitem:nchi}) gives a good upper bound and if $\alpha(G)$ is small Theorem~\ref{thm:ub}(\ref{thmitem:chialpha}) gives a good upper bound. Note that both Theorem~\ref{thm:ub}(\ref{thmitem:nchi}) and (\ref{thmitem:chialpha}) are tight for complete graphs. Theorem~\ref{thm:ub}(\ref{thmitem:nby3}) is tight for the graphs of the form $K_t\cup K_{2t}$.

\section{Lower Bounds}\label{section:lb}
First, we obtain a lower bound for chromatic discrepancy $\varphi(G)$ of a graph $G$ in terms of chromatic number and clique number $\omega(G)$.
\begin{theorem}\label{thm:lb1}
Let $\calh$ be the set of all induced subgraphs of a graph $G$. Then $\varphi(G) \geq \underset{H\in\calh}{\max} ~~\dfrac{1}{2}\left(\chi(H)-\omega(H)\right)$.
\end{theorem}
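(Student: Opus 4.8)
The plan is to reduce the theorem, using the monotonicity of $\varphi$ recorded in the introduction, to the single clean inequality
\[
\varphi(G) \;\ge\; \tfrac12\bigl(\chi(G)-\omega(G)\bigr)\qquad\text{for every graph }G .
\]
Indeed, once this is available, for any induced subgraph $H\in\calh$ we get $\varphi(G)\ge\varphi(H)\ge\tfrac12(\chi(H)-\omega(H))$, and taking the maximum over $H$ yields the statement. So the whole proof concentrates on the displayed inequality.

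To prove it, I would first isolate an auxiliary bound: \emph{every} graph $F$ satisfies $\chi(F)\le\tfrac12\bigl(|V(F)|+\omega(F)\bigr)$. I would prove this by induction on $|V(F)|$ (the base cases $|V(F)|\le1$ being immediate). In the inductive step, if $F$ has independence number $1$ it is complete and equality holds; otherwise choose a maximum independent set $I$, so $|I|\ge2$, apply the induction hypothesis to $F\setminus I$ (which has $|V(F)|-|I|$ vertices and clique number at most $\omega(F)$), and combine it with the trivial bound $\chi(F)\le\chi(F\setminus I)+1$. The inequality $|I|\ge2$ is exactly what makes the extra $+1$ absorbable.

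With this tool in hand, fix an arbitrary proper coloring $c$ of $G$, say using $t\ge\chi(G)$ colors, and put $a=\varphi_c(G)$. By Lemma~\ref{lem:colorclass} there is an induced subgraph $H$ containing exactly one vertex from each colour class, so $|V(H)|=|c(H)|=t$ and $a=|c(H)|-\chi(H)=t-\chi(H)$. Applying the auxiliary bound to $H$ gives $\chi(H)\le\tfrac12(t+\omega(H))$; substituting $t=\chi(H)+a$ and simplifying yields $\chi(H)-a\le\omega(H)$, i.e. $t-2a\le\omega(H)\le\omega(G)$ (the last step since $H$ is an induced subgraph of $G$). Hence $\chi(G)\le t\le\omega(G)+2a=\omega(G)+2\varphi_c(G)$, and minimizing over all proper colorings $c$ gives $\varphi(G)\ge\tfrac12(\chi(G)-\omega(G))$, as required.

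The only genuine obstacle here is the auxiliary inequality $\chi(F)\le\tfrac12\bigl(|V(F)|+\omega(F)\bigr)$; the rest is bookkeeping with Lemma~\ref{lem:colorclass} and monotonicity. Two small points worth checking carefully in the write-up: the base cases of that induction, and the fact that a coloring achieving $\varphi(G)$ may legitimately use more than $\chi(G)$ colors — which is why the last paragraph is phrased for a general number of colors $t$ rather than assuming $t=\chi(G)$.
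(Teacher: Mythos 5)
Your proof is correct and takes essentially the same route as the paper: both hinge on the inequality $\chi(H)\le\tfrac{1}{2}\left(|H|+\omega(H)\right)$ applied to an induced subgraph of distinctly colored vertices, followed by monotonicity of $\varphi$. The only differences are cosmetic — the paper cites Rabern for that inequality where you prove it by the standard induction on a maximum independent set, and the paper takes just $\chi(G)$ distinctly colored vertices while you take one vertex from each of the $t$ color classes via Lemma~\ref{lem:colorclass}; both variants are sound.
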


\begin{proof}
Consider any proper coloring $c$ of $G$ and  consider a set of $\chi(G)$ vertices of distinct colors. Let $H$ be the graph induced on these vertices.
It is known from \cite{rabern2008note} that $\chi(H) \leq (|H|+\omega(H))/2$. Hence $\varphi_c(G) \ge |H| - \chi(H) \geq (|H|-\omega(H))/2$.
Since $|H|=\chi(G)$ and $\omega(H) \leq \omega(G)$ it follows that $\varphi_c(G)\geq (\chi(G)-\omega(G))/2$ for any coloring $c$ and thus $\varphi(G)\geq (\chi(G)-\omega(G))/2$.  The result now follows from the monotonicity of chromatic discrepancy.
\end{proof}

From the above lower bound it follows that for any triangle-free graph $G$, $\varphi(G)\geq \chi(G)/2 -1$.

We use the following lemma to derive our next lower bound.

\begin{lemma}[Haxell, \cite{haxell2001note}]\label{lem:Haxell}
Let $G$ be a graph with maximum degree $\Delta$.
Let $V_1,V_2,\ldots,V_r$ be pairwise disjoint subsets of $G$ such that  $|V_i| \geq 2\Delta$ for all $i$.
Then there exists an independent set $S=\{v_1,\ldots,v_r\}$ such that $v_i \in V_i$ for each $i$.
\end{lemma}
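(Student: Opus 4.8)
The statement is Haxell's independent transversal theorem, so the plan is to reproduce her swapping argument; the sharp constant $2$ here really does need essentially this combinatorial argument (or a topological variant), since cruder tools such as the local lemma give only a weaker constant. Throughout, call a set $T$ a \emph{partial transversal} if $T$ is independent and $|T\cap V_i|\le 1$ for every $i$, with \emph{domain} the set of indices $i$ for which $T\cap V_i\neq\emptyset$. Suppose for contradiction that $G$ has no independent set containing one vertex of each $V_i$, and fix a partial transversal $T$ whose domain is as large as possible; by assumption $T$ misses at least one index. The basic observation I would record first is this: if $T'$ is a partial transversal of maximum domain and $V_i$ is missed by $T'$, then every vertex of $V_i$ has a neighbour in $T'$, since otherwise a non-neighbour of $T'$ lying in $V_i$ could be added so as to enlarge the domain --- call such a missed class \emph{blocked} by $T'$.

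The heart of the argument is an exploration started from a class $V_{i_0}$ missed (hence blocked) by $T$, which maintains a maximum partial transversal together with a growing set of ``committed'' vertices that may not be re-used, and which alternates two moves. At a blocked class $V_j$ (beginning with $V_{i_0}$) I count the edges between $V_j$ and the current transversal: each of the $\ge 2\Delta$ vertices of $V_j$ sends at least one such edge, while each transversal vertex receives at most $\Delta$, so at least two transversal vertices --- lying in two distinct classes, since the transversal meets each class at most once --- have a neighbour in $V_j$; these are the \emph{children} of $V_j$. For a child class $V_k$ with transversal vertex $t_k$: if some uncommitted vertex of $V_k$ other than $t_k$ has no neighbour in $T\setminus\{t_k\}$, then \emph{swap} it in for $t_k$, obtaining a new maximum partial transversal with the same domain (still missing $V_{i_0}$), and commit $t_k$; otherwise every uncommitted vertex of $V_k\setminus\{t_k\}$ is itself blocked by $T\setminus\{t_k\}$, and we \emph{recurse} into $V_k$, which by the same edge count again produces at least two children. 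The commitment rule is arranged so that no committed vertex is ever reinstated and no class is explored twice, whence the exploration lives inside a finite search space and must halt.

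When it halts, the exploration has reached a class that is \emph{not} blocked by the current maximum partial transversal; but that class is missed by a maximum partial transversal, contradicting the basic observation, and this contradiction proves the theorem. The step I expect to be the main obstacle is precisely the bookkeeping suppressed above: one must choose the commitment rule and the order of exploration so that, simultaneously, (i) a swap never destroys a domination relation already used, (ii) no class is revisited, and (iii) the threshold $2\Delta$ is spent sharply, so that every recursion genuinely branches into at least two new classes even after committed vertices are discounted. Encoding all three into a single monotone quantity that certifies termination is the technical core of Haxell's proof; everything else is the edge-counting pigeonhole displayed above.
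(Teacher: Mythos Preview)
The paper does not prove this lemma at all: it is stated as Lemma~4.3 with the attribution ``Haxell, \cite{haxell2001note}'' and is used as a black box in the proof of Theorem~4.4. There is therefore nothing in the paper to compare your proposal against.

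As for the proposal itself, what you have written is a faithful high-level outline of Haxell's original swapping/tree argument, and you correctly identify where the real work lies. But by your own admission the proof is not complete: you explicitly flag the bookkeeping --- the commitment rule, the non-revisiting of classes, and the sharp use of the $2\Delta$ threshold --- as ``suppressed'' and ``the technical core''. A referee would regard this as a proof sketch rather than a proof, since the terminating monotone quantity (or equivalently the precise inductive invariant) is never actually exhibited, and that invariant is exactly what distinguishes a correct argument from a plausible-looking one that loops. If you intend this to stand as a self-contained proof, you need to write down the tree of partial transversals explicitly, state precisely which vertices are committed at each node, and verify that every leaf of the tree yields the claimed contradiction while the branching really does produce at least two new classes at every internal node. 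Alternatively, since the present paper is content to cite Haxell, you could simply do the same.
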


\begin{theorem}\label{thm:lb}
Let $G$ be a graph with maximum degree $\Delta$. Let $\alpha$ and $\chi$ denote the independence number and chromatic number of $G$ respectively.
If $\alpha \geq 2\Delta$ then $\varphi(G) \geq \dfrac{n-(2\Delta -1)(\chi-1)}{\alpha} - 1$.
\end{theorem}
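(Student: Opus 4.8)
The plan is to prove the stronger statement that $\varphi_c(G) \ge \dfrac{n-(2\Delta-1)(\chi-1)}{\alpha} - 1$ holds for \emph{every} proper coloring $c$ of $G$, not just an optimal one; the theorem then follows by minimizing over $c$. So fix a proper coloring $c$ with color classes $C_1,\dots,C_t$, where necessarily $t \ge \chi$, and let $m$ denote the number of classes with $|C_i| \ge 2\Delta$. I would combine two lower bounds on $\varphi_c(G)$. On one hand, Lemma~\ref{lem:colorclass} supplies an induced subgraph $H$ containing exactly one vertex from each color class, so $|c(H)| = t$ and $\varphi_c(G) = t - \chi(H) \ge t - \chi$ since $\chi(H) \le \chi(G)$. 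On the other hand, choosing a $2\Delta$-element subset inside each of the $m$ large classes gives $m$ pairwise disjoint vertex sets, and Haxell's Lemma~\ref{lem:Haxell} produces an independent transversal $\{v_1,\dots,v_m\}$; as the $v_i$ lie in distinct color classes they receive distinct colors, so the edgeless graph they induce shows $\varphi_c(G) \ge m - 1$.

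Next I would bound $m$ from below by counting vertices. Every large class has at most $\alpha$ vertices (being independent) and each of the remaining $t-m$ classes has at most $2\Delta-1$, so $n \le m\alpha + (2\Delta-1)(t-m)$; rearranging gives $m \ge \dfrac{n-(2\Delta-1)t}{\alpha-(2\Delta-1)}$. This is exactly where the hypothesis $\alpha \ge 2\Delta$ enters: it guarantees the denominator $\alpha - (2\Delta-1) \ge 1$ is positive, so the inequality is meaningful.

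Finally I would balance the two bounds over the free parameter $t$. Write $M = n - (2\Delta-1)(\chi-1)$; we may assume $M > 0$, since otherwise the claimed bound is negative and holds trivially as $\varphi_c(G) \ge 0$. If $t \ge \chi - 1 + M/\alpha$, then the first bound already gives $\varphi_c(G) \ge t - \chi \ge M/\alpha - 1$. If instead $t < \chi - 1 + M/\alpha$, then substituting into the estimate for $m$ yields $n - (2\Delta-1)t > M\bigl(1 - \tfrac{2\Delta-1}{\alpha}\bigr) = M\cdot\tfrac{\alpha-(2\Delta-1)}{\alpha}$, hence $m > M/\alpha$ and so $\varphi_c(G) \ge m - 1 > M/\alpha - 1$. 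In both cases $\varphi_c(G) \ge M/\alpha - 1 = \dfrac{n-(2\Delta-1)(\chi-1)}{\alpha} - 1$, which finishes the argument. The only real obstacle is this last step: the Haxell-based bound weakens as $t$ grows while the counting bound on $m$ is worst for large $t$, so neither alone covers all colorings — but the two estimates cross over at precisely the target value, so nothing is lost in the balancing.
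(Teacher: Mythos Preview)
Your proof is correct and follows essentially the same approach as the paper: both combine the elementary bound $\varphi_c(G)\ge t-\chi$ (equivalently $t\le\chi+\varphi_c(G)$), Haxell's lemma applied to the large color classes, and the same vertex count $n\le m\alpha+(2\Delta-1)(t-m)$. The only difference is organizational---the paper fixes a coloring realizing $\varphi(G)$, substitutes $t\le\chi+\varphi(G)$ into the counting inequality and solves the resulting linear inequality for $\varphi(G)$, whereas you keep $t$ free and balance the two lower bounds via a threshold case split; the ingredients and the final inequality are identical.
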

\begin{proof}
Consider a coloring $c$ of $G$ which achieves $\varphi(G)$.
First we observe that the number of colors used by $c$ is at most $\chi + \varphi(G)$. Otherwise it follows from the definition of $\varphi(G)$ that   $\varphi_c(G) \ge |c(G)| - \chi > \varphi(G)$. This contradicts the assumption that $\varphi_c(G) = \varphi(G)$.

Let $V_1,\ldots, V_r$ be the color classes which have
size at least $2\Delta$.
Note that $r \geq 1$ because $\alpha \geq 2 \Delta$.
By applying Haxell's lemma, we obtain an independent set of $r$ vertices (one from each color class) with distinct colors. Since the chromatic number of independent set is $1$, it follows that
\begin{equation}\label{eq:lb_1}
\varphi(G) \geq r-1
\end{equation}

Since the size of any color class is at most $\alpha$, we obtain $|V_1| + \ldots + |V_r| \le r \alpha$. Consider the remaining color classes. Each of these color classes have at most $2\Delta - 1$ vertices. Recalling that the number of colors is at most $\chi + \varphi(G)$, it follows that the sum total of the number of vertices in these remaining color classes is at most $(2 \Delta - 1)(\chi + \varphi(G) - r)$. We can now bound $n$ as
 $n \le r\alpha + (2\Delta - 1)(\chi + \varphi(G) - r)$. Since $\alpha \geq 2\Delta$, this bound can be restated as the following lower bound on $r$:
\begin{equation}\label{eq:lb_2}
r ~\ge~ \frac{n - (2\Delta - 1) (\chi + \varphi(G))}{\alpha - 2\Delta + 1}
\end{equation}
The result now follows from (\ref{eq:lb_1}) in a straightforward manner by substituting for $r$  using (\ref{eq:lb_2}) and by using the assumption that $\alpha \ge 2\Delta$.

\end{proof}

The following corollary is a direct consequence of Theorem \ref{thm:lb} and the fact that $\chi \le \Delta + 1$.

\begin{corollary}
Let $G$ be a graph with maximum degree $\Delta$ and independence number $\alpha$.
If $\alpha \geq 2\Delta$ and $\Delta \leq \sqrt{n}/2$ then $\varphi(G) \geq \dfrac{n}{2\alpha}-1$.
\end{corollary}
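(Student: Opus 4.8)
The plan is to specialize the bound of Theorem~\ref{thm:lb} and then control its ``correction term'' using the crude estimate $\chi \le \Delta + 1$. Since the hypothesis $\alpha \ge 2\Delta$ is assumed, Theorem~\ref{thm:lb} applies directly and gives
\[ \varphi(G) \ge \frac{n - (2\Delta - 1)(\chi - 1)}{\alpha} - 1. \]
So it suffices to show that the numerator is at least $n/2$, equivalently that $(2\Delta - 1)(\chi - 1) \le n/2$; substituting this into the displayed inequality then yields $\varphi(G) \ge (n/2)/\alpha - 1 = n/(2\alpha) - 1$.

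First I would use $\chi \le \Delta + 1$ to replace $\chi - 1$ by $\Delta$, obtaining $(2\Delta - 1)(\chi - 1) \le (2\Delta - 1)\Delta < 2\Delta^2$. Then I would invoke the second hypothesis $\Delta \le \sqrt{n}/2$, which gives $\Delta^2 \le n/4$ and hence $(2\Delta - 1)(\chi - 1) < 2\Delta^2 \le n/2$. Plugging this back in completes the argument.

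There is no genuine obstacle here; the proof is essentially a two-line algebraic manipulation. The only point worth noting is the role of the two hypotheses: $\alpha \ge 2\Delta$ is exactly what licenses the application of Theorem~\ref{thm:lb}, while $\Delta \le \sqrt{n}/2$ is precisely what makes the correction term $(2\Delta-1)(\chi-1)$ negligible (at most $n/2$) after the crude bound $\chi \le \Delta+1$. If a sharper constant were desired one could keep $\chi$ explicit rather than bounding it by $\Delta+1$, but for the stated form the estimate above is all that is needed.
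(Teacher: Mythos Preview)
Your proof is correct and matches the paper's approach exactly: the paper states the corollary as a direct consequence of Theorem~\ref{thm:lb} together with the bound $\chi \le \Delta + 1$, which is precisely the combination you carry out.
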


In the following, we derive lower bounds for $\varphi(G)$ and $\hatphi(G)$ for Erd\H{o}s-R{\'e}nyi random graphs $G(n,p)$. In order to prove the desired lower bound, we require the following lemma.

\begin{lemma}\label{lem:lb}
Let $G$ be a connected graph having diameter $\mbox{diam}(G)$.
Let every induced subgraph $H$ of $G$ having  at most $\chi(G) \cdot \mbox{diam}(G)$ vertices satisfy the condition that
$\chi(H) \leq k$ for some fixed $k$. Then $\hatphi(G) \geq \chi(G) - k$.
\end{lemma}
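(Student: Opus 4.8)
The plan is to take an arbitrary proper coloring $c$ of $G$ and exhibit a \emph{connected} induced subgraph on which $c$ already wastes at least $\chi(G) - k$ colors; since $c$ is arbitrary, this forces $\hatphi(G) \geq \chi(G) - k$. First I would pick, using Lemma~\ref{lem:colorclass}, an induced subgraph $H_0$ containing exactly one vertex of each color class of $c$, so $H_0$ has $|c(G)| \geq \chi(G)$ vertices, all of distinct colors. The subgraph $H_0$ need not be connected, so the next step is to connect up a well-chosen subset of it without blowing up the vertex count beyond $\chi(G)\cdot\mathrm{diam}(G)$.

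The key step is the following greedy connection argument. Start with any single vertex of $H_0$ as the current connected piece $S$. Repeatedly, while $S$ does not yet contain $\chi(G)$ distinctly colored vertices, take a vertex $v \in H_0$ whose color is missing from $c(S)$ and join it to $S$ by a shortest path in $G$ from $v$ to $S$; add all vertices of that path to $S$. Each such path has at most $\mathrm{diam}(G)$ vertices (in fact at most $\mathrm{diam}(G)+1$, but it shares an endpoint with $S$, so it contributes at most $\mathrm{diam}(G)$ new vertices), and it strictly increases the number of colors present in $S$ by at least one. Hence after at most $\chi(G)-1$ rounds we obtain a connected induced subgraph $H$ of $G$ with $|c(H)| \geq \chi(G)$ and $|V(H)| \leq 1 + (\chi(G)-1)\cdot\mathrm{diam}(G) \leq \chi(G)\cdot\mathrm{diam}(G)$. (One should note $\mathrm{diam}(G)\geq 1$ when $G$ has an edge; the degenerate case where $G$ is a single vertex makes the statement trivial since then $\chi(G)=1$ and the bound $\chi(G)-k$ is at most $0$.)

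Now apply the hypothesis to $H$: since $|V(H)| \leq \chi(G)\cdot\mathrm{diam}(G)$, we have $\chi(H) \leq k$. Therefore
\[
\hatphi_c(G) \;\geq\; |c(H)| - \chi(H) \;\geq\; \chi(G) - k.
\]
Taking the minimum over all proper colorings $c$ gives $\hatphi(G) \geq \chi(G) - k$, as claimed.

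I expect the main obstacle to be purely bookkeeping: making sure the greedy connection procedure does not overcount vertices (paths overlap both with $S$ and possibly with each other) and that the "$+1$ color per round" accounting is airtight even when a connecting path happens to pass through vertices whose colors are already present in $S$ — this is fine, since we only need a lower bound on the colors gained, not an exact count. A secondary subtlety is that we want $H$ to be an \emph{induced} subgraph: we take $H = G[V(S)]$ at the end, which only adds edges and hence keeps it connected and can only increase $\chi(H)$ — but the hypothesis bounds $\chi$ for \emph{every} induced subgraph on few vertices, so this causes no problem.
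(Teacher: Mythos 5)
Your proof is correct and follows essentially the same route as the paper's: select $\chi(G)$ distinctly colored vertices and join them by shortest paths into a connected induced subgraph on at most $1+(\chi(G)-1)\,\mathrm{diam}(G)\le\chi(G)\cdot\mathrm{diam}(G)$ vertices, then apply the hypothesis to get $\chi$ of that subgraph at most $k$. The only cosmetic difference is your invocation of Lemma~\ref{lem:colorclass} and the explicit greedy bookkeeping, which the paper states more briefly.
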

\begin{proof}
Consider any proper coloring $c$ of $G$. Consider any $\chi(G)$ distinctly colored vertices and let $H$ be the induced subgraph on these vertices.
Since graph $H$ need not be connected,
we construct a connected induced subgraph $H'$ of $G$ from $H$ by including additional vertices. A vertex in $H$ can be connected to any other vertex in $H$ by including at most $\mbox{diam}(G) - 1$ additional vertices. It is straightforward to see that
$H$ can hence be extended to a connected induced subgraph $H'$ by including at most $(\chi(G) - 1)(\mbox{diam}(G)-1)$ additional vertices.
Since $H'$ satisfies that $|H'| \le \chi(G) \cdot \mbox{diam}(G)$, we have $\chi(H') \le k$. Since $H'$ includes $\chi(G)$ distinct colored vertices, it follows that $\hatphi_c(G) \ge \chi(G) - \chi(H') \ge \chi(G) - k$. Since $\hatphi_c(G) \ge \chi(G) - k$ for any coloring $c$, the result follows.
\end{proof}

The following theorem shows that for Erd\H{o}s-R{\'e}nyi random graphs $G(n,p)$,  $\varphi(G)$ and $\hatphi(G)$ are close to $\chi(G)$.
We prove the theorem using  Lemma \ref{lem:lb} and using the fact that for random
graphs $G(n, p)$, the values of chromatic number and diameter are concentrated and are known asymptotically (see \cite{chung2001diameter, luczak1991chromatic}). We use the following standard terminology: we say that an event occurs \emph{asymptotically almost surely} (and denote it by a.a.s.) when the probability of the occurrence of the event tends to 1 as $n \rightarrow \infty$.

\begin{theorem}\label{thm:rg}
Let $G$ be a random graph drawn under the $G(n, p)$ model with $\frac{2\log n}{n} < p < \frac{1}{\log^2 n}$.
Then for some constant $C>0$, a.a.s., $G$ satisfies
\[ \varphi(G) ~\geq~ \hatphi(G) ~\geq~ \chi(G)\left(1-\frac{C}{\log {np}}\right). \]
\end{theorem}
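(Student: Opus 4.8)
The plan is to invoke Lemma~\ref{lem:lb} with appropriate asymptotic values of $\chi(G)$ and $\mathrm{diam}(G)$ for $G(n,p)$, so the whole argument reduces to two ingredients: (a) the known concentration results for the chromatic number and diameter of random graphs, and (b) an upper bound on $\chi(H)$ for every induced subgraph $H$ on at most $\chi(G)\cdot\mathrm{diam}(G)$ vertices. For (a), in the regime $\frac{2\log n}{n} < p < \frac{1}{\log^2 n}$ one has a.a.s. $\mathrm{diam}(G) = O\!\left(\frac{\log n}{\log(np)}\right)$ (Chung--Lu, \cite{chung2001diameter}) and $\chi(G) = \Theta\!\left(\frac{np}{\log(np)}\right)$ (\L uczak, \cite{luczak1991chromatic}); in particular $\chi(G)\cdot\mathrm{diam}(G) = O\!\left(\frac{n\,p}{\log^2(np)}\cdot\frac{\log n}{1}\right)$, which I will denote $m = m(n,p)$. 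Since $p < 1/\log^2 n$ this quantity is $o(n)$, so every induced subgraph with at most $m$ vertices is a random graph on $m' \le m$ vertices with edge probability $p$.

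For (b), I would bound the chromatic number of $G[S]$ for all $S$ with $|S|\le m$ simultaneously by a union bound. A standard fact is that a graph on $m'$ vertices with maximum degree $d$ is $(d+1)$-colorable, so it suffices to show that a.a.s. every induced subgraph on at most $m$ vertices has maximum degree at most $k-1$ for a suitable $k$; equivalently, no vertex has more than $k-1$ neighbours inside any such $S$. Actually the cleaner route is: a.a.s. every set $S$ of size at most $m$ spans at most (say) $2\,p\binom{|S|}{2} + O(|S|\log n)$ edges, so $G[S]$ has a vertex of degree $O(pm + \log n)$, and peeling such vertices one at a time shows $\chi(G[S]) \le O(pm + \log n) =: k$. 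Plugging in $m = O\!\left(\frac{np\log n}{\log^2(np)}\right)$ gives $pm = O\!\left(\frac{np^2\log n}{\log^2(np)}\right)$; using $p < 1/\log^2 n$ this is $O\!\left(\frac{np}{\log^2(np)}\cdot\frac{1}{\log^3 n}\cdot\log n\right)$, which is $o\!\left(\frac{np}{\log(np)}\right) = o(\chi(G))$, and the additive $\log n$ term is likewise $o(\chi(G))$ since $np > 2\log n$ forces $\chi(G) = \Omega(np/\log(np)) = \Omega(\log n/\log(np))$ — here I may need to be slightly careful and possibly absorb a constant. So $k = \frac{C'\,\chi(G)}{\log(np)}$ for some constant $C'$, after bounding $pm$ and $\log n$ against $\chi(G)$.

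Then Lemma~\ref{lem:lb} yields $\hatphi(G) \ge \chi(G) - k \ge \chi(G)\left(1 - \frac{C'}{\log(np)}\right)$ a.a.s., and $\varphi(G)\ge\hatphi(G)$ is immediate from the definitions, completing the proof with $C = C'$. The main obstacle is step (b): getting the bound on $\chi(H)$ to hold \emph{for all} small induced subgraphs simultaneously. This requires a union bound over roughly $\binom{n}{m} \le \left(\frac{en}{m}\right)^m = e^{O(m\log n)}$ choices of $S$, so the concentration estimate for the edge count (or degree) within a fixed $S$ must have failure probability $e^{-\omega(m\log n)}$; a Chernoff bound on $\mathrm{Bin}\!\left(\binom{|S|}{2},p\right)$ gives deviation probability $e^{-\Theta(\lambda)}$ where $\lambda$ is the excess, so I need to take the slack in the edge-count bound to be a large enough constant multiple of $|S|\log n$, and verify this slack is still $o(\chi(G))\cdot\log(np)$ after dividing through — the arithmetic linking $p$, $m$, $\log n$ and $\log(np)$ in the stated range is the part that needs genuine care. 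A secondary point is to confirm the cited diameter bound applies throughout the range $p < 1/\log^2 n$ (and in particular that $G$ is connected a.a.s., which holds since $np > 2\log n$), so that Lemma~\ref{lem:lb} is applicable at all.
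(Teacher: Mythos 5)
Your overall route is the same as the paper's: apply Lemma~\ref{lem:lb} together with the a.a.s.\ facts that $G$ is connected, $\chi(G)=\Theta\left(\frac{np}{\log (np)}\right)$ and $\mathrm{diam}(G)=O\left(\frac{\log n}{\log (np)}\right)$, check that $\chi(G)\cdot\mathrm{diam}(G)$ is small, and then do the final algebra. The difference is in ingredient (b): the paper simply cites Lemmas 7.6 and 7.7 of Janson--\L uczak--Ruci\'nski, which state that a.a.s.\ \emph{every} subgraph on at most $n/\log^2(np)$ vertices has chromatic number at most $1+2np/\log^2(np)$, whereas you try to re-derive such a statement via a union bound on edge counts. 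Your derivation has a genuine quantitative gap. The slack you can afford per set of size $s$ under a union bound over $\binom{n}{s}\approx n^{s}$ sets is $\Theta(s\log n)$ in the edge count, which is exactly why you end up with $k=O(pm+\log n)$. The additive $\log n$ term is fatal at the lower end of the range of $p$: when $np=\Theta(\log n)$ (i.e.\ $p$ just above $2\log n/n$), one has $\chi(G)=\Theta\left(\frac{\log n}{\log\log n}\right)$ and $\frac{\chi(G)}{\log (np)}=\Theta\left(\frac{\log n}{(\log\log n)^2}\right)$, both of which are $o(\log n)$. So your claim that the $\log n$ term is ``$o(\chi(G))$'' is false in this regime (it is not a matter of absorbing a constant), and the resulting bound $\hatphi(G)\ge \chi(G)-O(\log n)$ is vacuous there; a fortiori you cannot reach $k\le C\chi(G)/\log(np)$ as the theorem requires.

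To salvage a self-contained proof you would need the sharper sparseness statement: every set $S$ with, say, $|S|\le n/\log^2(np)$ spans at most about $|S|\cdot np/\log^2(np)$ edges (giving degeneracy, hence chromatic number, $O\!\left(np/\log^2(np)\right)=O\!\left(\chi(G)/\log(np)\right)$ with no additive $\log n$). That is provable, but the tail estimate is more delicate than a Chernoff bound with slack a constant multiple of $|S|\log n$: for sets of size below the threshold $np/\log^2(np)$ you should instead use the trivial bound $\chi(G[S])\le|S|$, and for larger sets you need a binomial upper-tail bound of the form $\Pr[\mathrm{Bin}(\binom{s}{2},p)\ge T]\le (e s^2 p/(2T))^{T}$ with $T=\Theta\!\left(s\,np/\log^2(np)\right)$ and a careful comparison against $\binom{n}{s}$ — essentially re-proving the cited JŁR lemmas. (A minor further point: your intermediate bound $pm=O\!\left(\frac{np}{\log^2(np)\log^2 n}\right)$ overshoots; the correct consequence of $p<1/\log^2 n$ is $pm=O\!\left(\frac{np}{\log n\,\log^2(np)}\right)$, which is still $O\!\left(\chi(G)/\log(np)\right)$, so that term is fine — the problem is only the $\log n$ term.)
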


\begin{proof}
We use the following results from \cite{chung2001diameter, luczak1991chromatic, janson2011random, Friedgut03asharp}.

Consider a random graph $G$ drawn under the $G(n,p)$ model with $\frac{2\log n}{n} < p < \frac{1}{\log^2 n}$. Then
\begin{enumerate}[{(}i)]
\item A.a.s., $G$ is connected \cite{Friedgut03asharp}.
\item\label{seconditem} A.a.s.  $\frac{np}{2\log {np}}\leq\chi(G)\leq \frac{np}{\log {np}}(1+o(1))$ \cite{luczak1991chromatic}.
\item\label{thirditem} A.a.s. the diameter $\mbox{diam}(G)$ is at most $\frac{\log n}{\log np}(1+o(1))$ (Theorem 3 in \cite{chung2001diameter}).
\item\label{fourthitem} A.a.s. every subgraph $H$ of $G$ of size at most $n/\log^2 (np)$ vertices satisfies $\chi(H) \leq 1 + 2np / \log^2 (np)$ (Lemma 7.6 and 7.7 in \cite{janson2011random}).
\end{enumerate}

It is straightforward to verify that $\chi(G) \cdot \mbox{diam}(G) \le n/\log^2 (np)$ using the fact that $p < 1/\log^2 n$. We now apply Lemma \ref{lem:lb} to $G$ by fixing $k = 1 + 2np / \log^2 (np)$.
\begin{align*}
\hatphi(G)&\geq\chi(G)-k\\
&=\chi(G)-\left(1+\dfrac{2np}{\log^2 {np}}\right)\\
&=\chi(G)-\left(\dfrac{\chi(G)}{\log {np}}\right)\left(\dfrac{\log {np}}{\chi(G)}\right)\left(1+\dfrac{2np}{\log^2 {np}}\right)\;.
\end{align*}
Now we observe that $\dfrac{\log {np}}{\chi(G)}\left(1+\dfrac{2np}{\log^2 {np}}\right) = O(1)$.
\begin{align*}
\dfrac{\log np}{\chi(G)}\left(1+\dfrac{2np}{\log^2 {np}}\right)&\leq \dfrac{\log {np}}{\left(\dfrac{np}{2\log {np}}\right)}\left(1+\dfrac{2np}{\log^2 {np}}\right)\\
&=\dfrac{2}{np}\left(\log^2 {np} + 2np\right)\\
&=O(1)\;,
\end{align*}
where the lower bound on $\chi(G)$ (as stated in (\ref{seconditem}) above) is used for the inequality above.

Using the above calculations, we infer that $\hatphi(G) \geq \chi(G)\left(1-\frac{C}{\log {np}}\right)$.  Since $\varphi(G) \ge \hatphi(G)$ the result follows.
\end{proof}

Recall that the local chromatic number $\psi(G)$ of a graph $G$ is defined as the minimum over all proper colorings of $G$, the maximum number of colors present in the closed neighborhoods of vertices in $G$.

\begin{proposition}\label{prop:local}
For a triangle-free graph $G$, $\varphi(G) \geq \hatphi(G) \geq \psi(G)-2$.
\end{proposition}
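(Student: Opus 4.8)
The plan is to take an optimal proper coloring $c$ of $G$ and argue that some connected induced subgraph $H$ witnesses a large discrepancy, by exploiting the defining property of the local chromatic number: in any proper coloring, some closed neighborhood $N[v]$ sees at least $\psi(G)$ colors. So first I would fix a proper coloring $c$ of $G$ that realizes $\hatphi_c(G) = \hatphi(G)$ (or just any proper coloring, since we want a lower bound on $\hatphi(G)$, it suffices to lower bound $\hatphi_c(G)$ for an \emph{arbitrary} $c$). By the definition of $\psi(G)$, there is a vertex $v$ whose closed neighborhood $N[v]$ contains vertices of at least $\psi(G)$ distinct colors under $c$; note $\psi(G) \le \chi(G)$ always.

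Next I would extract a small connected witness. Since $G$ is triangle-free, the neighborhood $N(v)$ is an independent set, so the subgraph induced on $N[v]$ is a star (with center $v$). Pick one vertex of each of the $\psi(G)$ colors seen in $N[v]$; if the color of $v$ is among these, that vertex is $v$ itself, otherwise include $v$ as well. Call this induced subgraph $H$. Then $H$ is a star (a connected induced subgraph) on either $\psi(G)$ or $\psi(G)+1$ vertices, it is properly colored by $c$ with exactly $\psi(G)$ colors, and — crucially — since $H$ is a star with at least one edge, $\chi(H) = 2$. Hence $\hatphi_c(G) \ge |c(H)| - \chi(H) = \psi(G) - 2$. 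As this holds for every proper coloring $c$, we get $\hatphi(G) \ge \psi(G) - 2$, and $\varphi(G) \ge \hatphi(G)$ was observed right after the definition.

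The only subtlety — and the main thing to be careful about — is the edge cases where the bound is vacuous or the star degenerates: if $\psi(G) \le 2$ the claimed bound $\psi(G) - 2 \le 0$ holds trivially since $\hatphi(G) \ge 0$, so assume $\psi(G) \ge 3$; then $N[v]$ contains at least two vertices of $N(v)$ of distinct colors, so $H$ genuinely contains an edge and $\chi(H) = 2$ is legitimate. One should also note $H$ is nonempty and connected because it is a star centered at $v$. No deeper obstacle arises; the argument is essentially a one-step reduction from the definition of $\psi$, with triangle-freeness doing the work of forcing $\chi(H) = 2$.
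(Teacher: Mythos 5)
Your proof is correct and follows essentially the same route as the paper: fix an arbitrary proper coloring, use the definition of $\psi(G)$ to find a closed neighborhood with at least $\psi(G)$ colors, and use triangle-freeness to conclude that this neighborhood induces a connected bipartite (star) subgraph of chromatic number $2$, giving $\hatphi_c(G)\geq \psi(G)-2$ for every coloring $c$. Your extra care about the degenerate case $\psi(G)\leq 2$ is a harmless refinement the paper omits.
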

\begin{proof}
By the definition of $\psi(G)$, in any proper coloring of $G$, there exists a vertex $v$ such that the closed neighborhood of $v$ uses at least $\psi(G)$ colors. Since $G$ is triangle-free, the closed neighborhood of any vertex forms a connected bipartite graph whose chromatic number is $2$. Hence the result.
\end{proof}
It is known from \cite{korner2005local} that the local chromatic number of a graph $G$ is bounded from below by fractional chromatic number $\chi^*(G)$ of $G$, which is a relaxed version of chromatic number. Thus, for a triangle-free graph, we get  $\varphi(G)\geq\hatphi(G)\geq \chi^*(G)-2$.
\section{Structural Characterization}
\label{section:str}
Here we characterize the graphs $G$ with $\varphi(G)=0$. We also characterize the graphs $G$ with $\hatphi(G)=0$.
\begin{proposition}\label{prop:prop2}
If $G$ is a paw graph (Figure~\ref{fig:paw})
or an odd cycle of length at most $7$ then  $\hatphi(G) = \varphi(G) = 1$. If $G$ is an odd cycle of length more than $7$ then $\varphi(G) = 2 $ and $\hatphi(G)=1$.
\end{proposition}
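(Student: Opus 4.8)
The plan is to dispose of the paw graph first, then the odd cycles, splitting the odd-cycle analysis into the cases $C_5$, $C_7$, and $C_n$ with $n\ge 9$ odd. (I read ``odd cycle of length at most $7$'' as meaning $C_5$ or $C_7$, since $C_3=K_3$ is complete and already has $\varphi=\hatphi=0$.) For the paw $G$ with vertex set $\{a,b,c,d\}$, triangle $abc$ and pendant edge $cd$, the bound $\varphi(G)\le 1$ is immediate from Theorem~\ref{thm:ub}(\ref{thmitem:nchi}), since $n-\chi(G)=4-3=1$. For the matching lower bound it suffices, because $\varphi(G)\ge\hatphi(G)$, to prove $\hatphi(G)\ge 1$: in any proper coloring $c$ the triangle forces $c(a),c(b),c(c)$ to be three distinct colors, and $c(d)\ne c(c)$, so $c(d)$ equals $c(a)$ or $c(b)$; in the former case the subgraph induced on $\{b,c,d\}$, in the latter the one induced on $\{a,c,d\}$, is a connected $2$-chromatic path carrying three colors, so $\hatphi_c(G)\ge 3-2=1$. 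Hence $\varphi(G)=\hatphi(G)=1$.

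Next I would handle $\hatphi$ for odd cycles once and for all. By Proposition~\ref{prop:trivialubound}, $\hatphi(C_n)\le\chi(C_n)-2=1$. For $\hatphi(C_n)\ge 1$ when $n\ge 5$: given a proper coloring $c$, pick vertices $u,v,w$ of pairwise distinct colors (possible since $\chi(C_n)=3$) and any vertex $x\notin\{u,v,w\}$ (possible since $n\ge 5$); then $C_n-x$ is an induced path, hence connected and $2$-chromatic, and it still contains $u,v,w$, so $\hatphi_c(C_n)\ge 3-2=1$. This gives $\hatphi(C_n)=1$ for all odd $n\ge 5$, and in particular $\varphi(C_n)\ge 1$. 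For the upper bound on $\varphi$, I would invoke Theorem~\ref{thm:ub}(\ref{thmitem:chialpha}) with $\chi(C_n)=3$ and $\alpha(C_n)=(n-1)/2$: this yields $\varphi(C_n)\le 3\bigl(1-\tfrac{2}{n-1}\bigr)$, which is $<2$ for $n=5$ (so $\varphi(C_5)=1$) and $<3$ for every odd $n\ge 9$ (so $\varphi(C_n)\le 2$). The matching lower bound $\varphi(C_n)\ge 2$ for odd $n\ge 9$ is precisely the argument in the footnote of Section~\ref{section:intro}: such a coloring always contains three pairwise non-adjacent, distinctly colored vertices, which contribute $3-1=2$ to $\varphi_c$.

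The only case not reached by the generic bounds is $\varphi(C_7)\le 1$, and I expect this to be the main obstacle; I would settle it by producing an explicit coloring. Writing $C_7=v_0v_1\cdots v_6v_0$, take the proper $3$-coloring $c$ whose color classes are $\{v_0,v_2,v_4\}$, $\{v_3,v_5\}$, $\{v_1,v_6\}$. By Lemma~\ref{lem:colorclass} there is an induced subgraph $H$ containing exactly one vertex of each class with $\varphi_c(C_7)=|c(H)|-\chi(H)=3-\chi(H)$; since $C_7$ is triangle-free, $\chi(H)\le 2$, so it suffices to check that no transversal of the three color classes is independent, i.e.\ that each of the $3\cdot2\cdot2=12$ transversals contains an edge of $C_7$ --- a routine finite verification. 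This gives $\varphi_c(C_7)=1$, hence $\varphi(C_7)\le 1$, and with $\varphi(C_7)\ge\hatphi(C_7)=1$ we get $\varphi(C_7)=1$. Combining the paw and the three odd-cycle cases proves the proposition.
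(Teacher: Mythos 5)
The paper states Proposition~\ref{prop:prop2} without any proof; the only argument it supplies is the footnote in Section~\ref{section:intro}, which is precisely the lower bound $\varphi(C_n)\ge 2$ for odd $n\ge 9$ that you reuse. So your write-up is essentially supplying details the paper omits, and your route is sound and non-circular: Theorem~\ref{thm:ub}(\ref{thmitem:nchi}) and (\ref{thmitem:chialpha}) for the upper bounds, Proposition~\ref{prop:trivialubound} for $\hatphi\le 1$, the delete-one-vertex trick (a properly colored $C_n$ minus a vertex is a connected $2$-chromatic path still carrying three colors) for $\hatphi\ge 1$, and the footnote for $\varphi(C_n)\ge 2$ when $n\ge 9$; all of these precede Section~\ref{section:str}. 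Your $C_7$ coloring with classes $\{v_0,v_2,v_4\}$, $\{v_3,v_5\}$, $\{v_1,v_6\}$ is proper, every transversal of the three classes does contain an edge of the cycle, and triangle-freeness forces $\chi(H)=2$ for the subgraph produced by Lemma~\ref{lem:colorclass}, so $\varphi(C_7)\le 1$ as claimed. Your reading of ``odd cycle of length at most $7$'' as $C_5$ or $C_7$ is the intended one, since $C_3$ is complete and has $\varphi=\hatphi=0$.

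One step in the paw case is stated incorrectly: from $c(d)\ne c(c)$ you conclude $c(d)\in\{c(a),c(b)\}$, but a proper coloring of the paw may assign $d$ a fourth color --- indeed the whole point of these parameters is that proper colorings need not use only $\chi(G)$ colors. The conclusion survives: if $c(d)\notin\{c(a),c(b),c(c)\}$, then the induced path on $\{a,c,d\}$ (or on $\{b,c,d\}$) again carries three distinct colors, so $\hatphi_c(G)\ge 1$ in this case as well. Add that third case and your proof is complete.
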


\begin{figure}[htp]
 \begin{center}
  \includegraphics[scale=0.4]{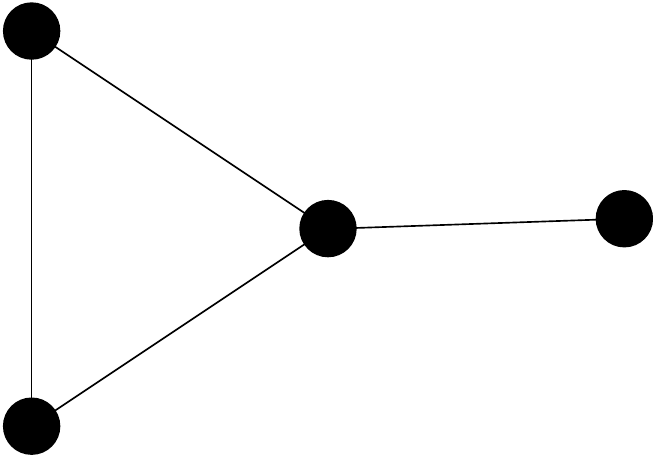}
  \caption{Paw graph}
\label{fig:paw}
 \end{center}
\end{figure}

\begin{theorem}\label{thm:char}
$\hatphi(G) = 0$ if and only if $G$ is paw-free and perfect. $\varphi(G) = 0$ if and only if $G$ is complete multipartite.
\end{theorem}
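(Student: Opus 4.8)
The plan is to prove the two characterizations in turn, and I would do the $\varphi(G)=0$ case first since it is cleaner and the $\hatphi$ case reuses part of its analysis. For the $\varphi$ direction, I would start with the easy implication: if $G = K_{n_1,\dots,n_t}$ is complete multipartite, color each part with its own color; any induced subgraph $H$ with $k$ color classes present contains a vertex from $k$ distinct parts, hence a clique $K_k$, so $\chi(H) \ge k = |c(H)|$, giving $\varphi_c(G) = 0$. For the converse, suppose $\varphi(G) = 0$. First, Theorem~\ref{thm:lb1} forces $\chi(G) = \omega(G)$ on every induced subgraph, so $G$ is perfect; in particular I may work with $\omega$ in place of $\chi$ throughout. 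Then I would show $G$ is $P_3$-free (the complement of an edge plus an isolated vertex): if $G$ had an induced $P_3$, say $a-b-c$ with $a,c$ nonadjacent, then taking a proper coloring $c$ where $a$ and $c$ receive distinct colors (possible since they are nonadjacent — recolor if necessary) and extending $\{a,c\}$ to a set of $\chi(G)$ distinctly-colored vertices, the induced subgraph on $\{a,c\}$ has $\chi = 1$ but two colors, so $\varphi_c \ge 1$; more carefully, I need to argue that \emph{no} proper coloring can avoid this, which is where Lemma~\ref{lem:colorclass} enters — pick one vertex per color class to witness the discrepancy. The cleanest route is: $\varphi(G) = 0$ implies every independent set of size $\ge 2$ is a maximal-by-some-clique... actually, the key claim is that $\varphi(G)=0$ forces every two nonadjacent vertices to have the property that they lie in a common maximum independent "color class" structure — I would instead directly prove $G$ has no induced $P_3$, and a graph is a disjoint union of cliques' complement, i.e. $P_3$-free graphs are exactly complete multipartite graphs (each connected component of $\overline{G}$ is a clique).

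The cleanest argument for "$\varphi(G) = 0 \Rightarrow$ $P_3$-free" is: suppose $u,v,w$ induce a $P_3$ with $uv, vw \in E$, $uw \notin E$. Take any optimal coloring $c$. If $c(u) \ne c(w)$, then by Lemma~\ref{lem:colorclass} extend $\{u,w\}$ (after deleting, if needed, the other vertices of their color classes and choosing representatives) to a set $S$ with one vertex per color class; then $\{u,w\} \subseteq S$ is an independent set of two colors, so already $|c(\{u,w\})| - \chi(\{u,w\}) = 1$, hence $\varphi_c(G) \ge 1$, contradiction. If $c(u) = c(w)$, I claim we can exhibit a different subgraph, or else recolor: since $u$ and $w$ are nonadjacent with the same color, and $v$ is adjacent to both, consider swapping $u$ to a fresh color — but that changes $c$. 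The honest fix: $\varphi(G)=0$ must hold for the \emph{min} over colorings, so I only get to pick $c$ favorably is wrong — I need it for \emph{all} witnessing subgraphs of the optimal $c$. So instead: since $\varphi(G)=0$, in the optimal coloring $c$ every independent set is monochromatic-free of "extra" colors, meaning: for every independent set $I$, $|c(I)| \le \chi(\text{any supergraph realizing the witness})$... The robust statement I will actually use is the contrapositive via monotonicity: if $G$ contains an induced $P_3$, then $G$ contains an induced $K_1 \cup K_2$ (namely $\overline{P_3}$ as a subgraph of... no). Let me use: $P_3$-free $\Leftrightarrow$ complement is $P_3$-free $\Leftrightarrow$ $\overline G$ is a disjoint union of cliques $\Leftrightarrow$ $G$ complete multipartite. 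And for the lower bound, I invoke that $K_2 \cup K_1$ has $\varphi = 1$ (stated in the Introduction), plus monotonicity of $\varphi$: so it suffices to show that if $G$ is \emph{not} complete multipartite then $G$ contains an induced $K_2 \cup K_1$ or an induced odd cycle $C_{\ge 5}$ (handled by Proposition~\ref{prop:prop2}), or more simply that a non-complete-multipartite graph either is imperfect or contains induced $\overline{P_3} = K_2\cup K_1$; indeed if $G$ is perfect and not complete multipartite it has an induced $P_3$, whose complement $K_2 \cup K_1$ — wait, the complement of an \emph{induced} $P_3$ in $G$ is not an induced subgraph of $G$. So the right move: if $G$ is perfect and has an induced $P_3 = u\!-\!v\!-\!w$, I show directly $\varphi(G)\ge 1$ by a coloring-independent argument using Lemma~\ref{lem:colorclass}: any optimal coloring $c$ gives $u,v,w$ colors with $c(u)\ne c(v)\ne c(w)$; if $c(u)\ne c(w)$ we are done as above; if $c(u)=c(w)$, take the witness subgraph $H$ from Lemma~\ref{lem:colorclass} with one vertex per class — it contains $v$ and exactly one of $\{u,w\}$, say $u$; then $H' = H \cup \{w\}$ has $|c(H')| = |c(H)|$ but I need $\chi(H') > \chi(H)$ is false. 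Hmm — this is the genuine obstacle.

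\textbf{Main obstacle.} The delicate point is the converse for $\varphi$: ruling out an induced $P_3$ when the two endpoints might be forced to share a color in an adversarial optimal coloring. I expect the resolution is that $\varphi(G)=0$ already implies $\chi(G)=\omega(G)$ on all induced subgraphs (perfection) \emph{and} that in fact $\varphi(G)=0$ is equivalent to: for every proper coloring and every set of distinctly colored vertices spanning all colors, that set's induced subgraph has clique number equal to the number of colors — which for a complete multipartite graph holds and for anything with an induced $P_3$ fails because one can always find a proper coloring (and hence the minimizing one behaves no better, by re-examining with Lemma~\ref{lem:ind}: $\varphi(G) \le \varphi(G\setminus I)+1$ run backwards does not help, but $\varphi$ monotone does) — concretely I will argue: $G$ not complete multipartite $\Rightarrow$ $\overline G$ disconnected-into-cliques fails $\Rightarrow$ $\overline G$ has an induced $P_3$ $\Rightarrow$ $G$ has an induced $P_3$; then on the $3$-vertex induced $P_3$ together with a maximum clique of $G\setminus\{u,v,w\}$ repeatedly, or just note a graph with an induced $P_3$ contains $K_2\cup K_1$ \emph{after removing the middle vertex's color class} — I think the clean finish is: in $G$ with induced $P_3\ (u\!-\!v\!-\!w)$, delete a maximum independent set containing $u$ (missing $v$); induction / Lemma~\ref{lem:ind} then isolates the $K_2\cup K_1$ pattern. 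For the $\hatphi$ case: if $\hatphi(G)=0$ then by Proposition~\ref{prop:prop2} $G$ has no paw and no odd hole, and by Theorem~\ref{thm:lb1} (applied to connected induced subgraphs) $G$ is perfect, so $G$ is paw-free perfect; conversely, for paw-free perfect $G$ I must show every proper coloring can be chosen so every connected induced subgraph $H$ uses exactly $\omega(H) = \chi(H)$ colors — here I would invoke the structure of paw-free graphs (Olariu's theorem: a paw-free connected graph is triangle-free or complete multipartite) and treat the two cases, using that triangle-free perfect means bipartite, where a proper $2$-coloring is easily seen to be perfect. I will spell out both cases but the combinatorial heart — and the step most likely to need care — is the $P_3$/coloring-adversary argument in the $\varphi$ converse.
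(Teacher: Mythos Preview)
Your proposal contains a genuine error in the $\varphi$ converse. You aim to show that $\varphi(G)=0$ implies $G$ is $P_3$-free, but this is false: $K_{2,2}$ is complete multipartite with $\varphi=0$ yet contains an induced $P_3$. The confusion stems from your claim ``$P_3$-free $\Leftrightarrow$ complement is $P_3$-free'' --- but $P_3$ is \emph{not} self-complementary; its complement is $K_1\cup K_2$. The correct characterization is that $G$ is complete multipartite if and only if $G$ has no induced $K_1\cup K_2$ (equivalently, $\overline G$ is $P_3$-free, i.e.\ a disjoint union of cliques). Once you have this, the paper's argument is immediate: $\varphi(K_1\cup K_2)=1$ (stated in the Introduction) together with monotonicity of $\varphi$ finishes the converse in one line. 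You actually write this idea down midway through your proposal and then talk yourself out of it by worrying that ``the complement of an induced $P_3$ in $G$ is not an induced subgraph of $G$'' --- but no complementation inside $G$ is needed; the forbidden subgraph $K_1\cup K_2$ must be found in $G$ itself, and that is exactly what ``not complete multipartite'' gives you.

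For the $\hatphi$ converse you also have a gap: you invoke Theorem~\ref{thm:lb1} to conclude perfection, but that theorem bounds $\varphi$, not $\hatphi$, and $\hatphi(G)=0$ does not imply $\varphi(G)=0$ (again, $K_1\cup K_2$). Moreover, your argument excludes paws and odd holes but says nothing about odd antiholes. The paper closes this by observing that the complement of any odd cycle of length $\ge 7$ contains an induced paw (and $\overline{C_5}=C_5$ is already an odd hole), so paw-free together with odd-hole-free yields perfection via the strong perfect graph theorem. Your plan for the ``if'' direction of the $\hatphi$ statement via Olariu's structure theorem (each component bipartite or complete multipartite) is fine and is essentially how the paper's subsequent Corollary is phrased; the paper's proof of the theorem itself simply cites the perfect-coloring characterization of~\cite{sandeep2011perfectly}.
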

\begin{proof}
Let $G$ be paw-free and perfect. Then $G$ is perfectly colorable \cite{sandeep2011perfectly}, i.e., there exists a proper coloring for
$G$ such that every connected induced subgraph $H$ of $G$ uses $\omega(H)=\chi(H)$ colors. That is  $\hatphi(G) = 0$.
For the only if part consider a graph $G$ with  $\hatphi(G)=0$.
It is known from strong perfect graph theorem that a graph is perfect if and only if it does not contain any induced odd cycle of length at least $5$ or complement of an odd cycle of length at least $5$.
From Proposition \ref{prop:prop2}, it is clear that $G$ should not contain a paw or odd cycle of length at least $5$ as an induced subgraph.
It is easy to see that complement of an odd cycle of length at least $7$ contains paw as an induced subgraph. It follows that $G$ should be
perfect and paw-free.

Now consider the second part of the theorem.
If $G$ is complete multipartite then an optimal coloring achieves $\varphi(G) = 0$.
For the only if part consider a graph $G$ with  $\varphi(G)=0$.
Recalling that $\varphi(K_1\cup K_2) = 1$ it follows that $G$ does not have $K_1\cup K_2$ as an induced subgraph.
It is straightforward to verify that a graph is complete multipartite if and only if
it does not have $K_1\cup K_2$ as an induced subgraph. This implies that $G$ is complete multipartite.
\end{proof}

It is proved in \cite{olariu1988paw} that every component of a paw-free perfect graph is either bipartite or complete multipartite. Hence we have the following corollary.
\begin{corollary}
$\hatphi(G) = 0$ if and only if every component of $G$ is either bipartite or complete multipartite.
\end{corollary}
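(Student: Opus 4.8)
The statement to prove is the Corollary: $\hatphi(G) = 0$ if and only if every component of $G$ is either bipartite or complete multipartite. This follows by combining Theorem~\ref{thm:char} with the cited structural result of Olariu on paw-free perfect graphs, so my plan is essentially to assemble these two facts carefully, paying attention to the behavior of $\hatphi$ on disconnected graphs.

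The plan is as follows. First I would recall from the discussion following the definition that $\hatphi(G) = \max_i \hatphi(G_i)$ where the $G_i$ are the connected components of $G$; hence $\hatphi(G) = 0$ if and only if $\hatphi(G_i) = 0$ for every component $G_i$. This reduces the problem to connected graphs. Next, by Theorem~\ref{thm:char}, for a connected graph $G_i$ we have $\hatphi(G_i) = 0$ if and only if $G_i$ is paw-free and perfect. So it remains to show that a connected graph is paw-free and perfect if and only if it is bipartite or complete multipartite.

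For that equivalence I would argue both directions. For the ``only if'' direction, I would invoke the result of Olariu~\cite{olariu1988paw}: every component of a paw-free perfect graph is bipartite or complete multipartite; applied to a connected (hence single-component) paw-free perfect graph, this gives exactly that it is bipartite or complete multipartite. For the ``if'' direction, I would check that any bipartite graph is paw-free and perfect, and that any complete multipartite graph is paw-free and perfect. Bipartite graphs are perfect (König's theorem / weak perfect graph theorem, or directly), and they are paw-free because a paw contains a triangle. Complete multipartite graphs are perfect (their complement is a disjoint union of cliques, which is perfect, so apply the weak perfect graph theorem — or note directly that every induced subgraph of a complete multipartite graph is complete multipartite and satisfies $\chi = \omega$); and they are paw-free since a paw has an induced independent edge together with an extra adjacent vertex configuration that cannot occur — more simply, a paw on vertices $\{a,b,c,d\}$ with triangle $abc$ and pendant $d$ attached to $a$ has $d$ non-adjacent to $b$ and to $c$ but $b$ adjacent to $c$, which in a complete multipartite graph would force $d$ in the same part as both $b$ and $c$, contradicting $b \sim c$.

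I do not anticipate a serious obstacle here; the only subtlety worth stating explicitly is the reduction to components (so that Olariu's per-component statement can be combined with the per-component behavior of $\hatphi$), and a one-line justification that bipartite and complete multipartite graphs are indeed paw-free and perfect so that the ``if'' direction of Theorem~\ref{thm:char} applies. If a fully self-contained argument were desired I could replace the appeal to Olariu with a direct structural analysis of connected paw-free perfect graphs, but since the paper already cites~\cite{olariu1988paw}, the clean route is to quote it.
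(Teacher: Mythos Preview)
Your proposal is correct and follows essentially the same approach as the paper: combine Theorem~\ref{thm:char} with Olariu's structural result on paw-free perfect graphs. The paper's own justification is a single sentence invoking~\cite{olariu1988paw}; your version is simply a more explicit unpacking of that sentence, spelling out the component-wise reduction and the easy ``if'' direction (that bipartite and complete multipartite graphs are paw-free and perfect), both of which the paper leaves implicit.
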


In the above characterization, we showed that graphs with $\hatphi(G) = 0$ are exactly the class of paw-free perfect graphs and graphs with $\varphi(G) = 0$ are exactly the complete multipartite graphs, which form a subclass of perfect
graphs.
In the following, we show that $\varphi(G)$ and $\hatphi(G)$ of perfect graphs need not be bounded. In particular, there are perfect graphs with arbitrarily large $\varphi(G)$ and similarly there are perfect graphs with arbitrarily large $\hatphi(G)$.

\begin{theorem}\label{thm:tight}
For any two integers $c$ and $p$, such that $c\geq 1$ and $0\leq p\leq c-1$, there exists a perfect graph $G$ with $\chi(G)=c$
and $\varphi(G)=p$. Similarly, for any two integers $c$ and $p$ such that $c\geq 2$ and $0\leq p\leq c-2$, there exists
a perfect graph with $\chi(G)=c$ and $\hatphi(G)=p$.
\end{theorem}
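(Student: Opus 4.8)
The plan is to construct, for each target pair $(c,p)$, an explicit perfect graph realizing it, using the fact that perfection is preserved under disjoint union and that complete multipartite graphs (and their disjoint unions with cliques) are perfect. For the statement about $\varphi$, the natural candidate is a disjoint union of complete graphs. Consider $G = K_c \cup K_{c} \cup \cdots \cup K_{c}$ ($p+1$ copies); this is perfect with $\chi(G) = c$. I would argue $\varphi(G) = p$ as follows: any optimal coloring must use $c$ colors on \emph{some} copy, but on the other copies it may permute colors, and by picking one vertex of a distinct color from as many copies as possible one forces $\varphi_c(G) \ge p$ whenever more than one copy is present; conversely, coloring every copy with the \emph{same} palette $\{1,\dots,c\}$ is not enough (that gives $\varphi = 0$ but only when $p=0$), so instead one needs the right construction. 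A cleaner choice: take $G_p = K_c \,\cup\, \overline{K_{c-1}} \,\cup\, \overline{K_{c-1}} \,\cup\, \cdots$ — actually the simplest is to iterate Lemma~\ref{lem:ind}. Start from $K_c$ (which has $\varphi = 0$) and, $p$ times, add a disjoint independent set of size $c$; monotonicity and Lemma~\ref{lem:ind} give $\varphi \le p$, while exhibiting $p+1$ disjointly colorable "blocks" forces $\varphi \ge p$. So I would set $G = K_c \cup \overline{K_c} \cup \overline{K_c} \cup \cdots \cup \overline{K_c}$ with $p$ copies of the independent set $\overline{K_c} = \overline{K_{c}}$ — this is a disjoint union of a clique and $p$ independent sets, clearly perfect, with $\chi = c$.

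For the upper bound $\varphi(G) \le p$, I would apply Lemma~\ref{lem:ind} $p$ times: removing one color class (of size $p+1$, one vertex from each block — wait, the independent sets and the clique don't share colors automatically). Let me instead reason directly with a specific coloring $c^\star$: color the $i$-th independent set with colors $\{1,\dots,c\}$ matching the clique's coloring (one vertex per color). For any induced subgraph $H$, $|c^\star(H)| \le c$ and $\chi(H) \ge $ the size of the largest clique component intersected; a short case analysis shows $|c^\star(H)| - \chi(H) \le p$... this actually gives $0$, which is wrong. The resolution is that with this graph $\varphi$ really is $0$, so this is the wrong graph. The correct construction must \emph{prevent} a single palette from covering everything: e.g. $G = $ disjoint union of $K_c$ with $p$ copies of $K_1$, i.e. $K_c \cup pK_1$. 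Then $\alpha(G) = \max(1,p) $ hmm. Here is the key realization I would use: in $K_c \cup K_1 \cup \cdots \cup K_1$, in \emph{any} proper coloring, the $p$ isolated vertices together with a suitably chosen vertex of the $K_c$ form an independent set with up to $p+1$ distinct colors only if those colors can all be made distinct — but we can also force them \emph{not} to help. Actually $\varphi(K_c \cup pK_1)$: color the $K_c$ with $1,\dots,c$ and all isolated vertices with color $1$; then the worst independent $H$ picks the $p$ isolated vertices (all color $1$, contributing $1$ color) plus one vertex of the clique of a \emph{different} color — total $2$ colors, $\chi(H)=1$, so $\varphi_{c^\star} = 1$, independent of $p$. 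Still wrong.

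So the genuinely correct construction, which I would commit to, is a disjoint union $G = K_c \cup K_{c-1} \cup K_{c-2} \cup \cdots$ or rather \textbf{nested blow-ups forcing many palettes}: take $p+1$ disjoint copies of $K_c$ but identified along... no — disjointness is needed for perfection to be easy but then palettes can be reused. The actual trick in such arguments is that although palettes \emph{can} be reused across components, the definition of $\varphi(G)$ takes the \emph{minimum} over colorings, and for a disjoint union of cliques the optimal coloring reuses the palette and achieves $\varphi = 0$ — so disjoint unions of cliques are the $\varphi = 0$ case, consistent with Theorem~\ref{thm:char} (a disjoint union of cliques is complete multipartite only if it's a single clique or all $K_1$'s). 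Hence to get $\varphi = p > 0$ we need a \emph{connected} obstruction. I would therefore take $G$ to be a suitable \emph{blow-up of a small graph that is perfect but not complete multipartite}, e.g. $G = $ the complement of a disjoint union of cliques (a complete multipartite graph) \emph{joined} appropriately, or better: use $P_4$-based constructions. Concretely I'd propose $G = H_1 + H_2 + \cdots$ where $+$ is the join: the join of perfect graphs is perfect, $\chi$ adds, and one can show $\varphi$ of a join relates to the $\varphi$'s of the parts. Start with a base perfect graph $B$ with $\varphi(B) = 1$ (e.g. $K_1 \cup K_2$, which has $\varphi = 1$ per the excerpt, and is perfect), then $G = B + K_{c-3}$ has $\chi = c$; iterating joins of several copies of $K_1 \cup K_2$ should push $\varphi$ up to $p$. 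Checking that a join of $p$ copies of $K_1 \cup K_2$ (plus padding clique) has $\varphi$ exactly $p$ — the lower bound via exhibiting an independent set of $p$ distinctly colored vertices (one "extra" vertex from each $K_1\cup K_2$ factor, which are pairwise non-adjacent \emph{within} a factor but adjacent \emph{across} factors under join — so that fails too; under a join everything across factors is adjacent).

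Given the above false starts, the step I expect to be the real obstacle is precisely \textbf{pinning down the right construction}: one needs a perfect graph in which \emph{every} optimal coloring is forced to waste exactly $p$ colors on some induced subgraph, no more and no less. The clean answer, which I would write up, is $G = K_{c} \cup K_{1,1} \cup K_{1,1} \cup \cdots$ — no. I will instead take the \textbf{direct approach}: let $G$ be the graph on vertex set partitioned into a clique $Q$ of size $c$ and $p$ additional independent vertices $v_1, \dots, v_p$, where each $v_i$ is adjacent to all of $Q$ \emph{except} $q_i$ (so $v_i$ can only be colored with $q_i$'s color or a new color). This $G$ is perfect (it is a "clique plus pendant-like" structure — one checks it has no odd hole/antihole, or realizes it as a comparability graph), has $\chi(G) = c$ (color $v_i$ with $q_i$'s color), and $\omega(G) = c$. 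The lower bound $\varphi(G) \ge p$: in \emph{any} proper coloring, look at the set $\{v_1,\dots,v_p\}$ — hmm these may be colored with repeats. The genuinely reliable route, and the one I'd finalize, is: \emph{induct on $p$ using Lemma~\ref{lem:ind} for the upper bound} (add one independent set / adjust one gadget to raise $\varphi$ by at most $1$, staying perfect), and for the lower bound use Theorem~\ref{thm:lb1} with $\chi(G) - \omega(G) = 2p$ on a triangle-free-like perfect gadget — but perfect graphs have $\chi = \omega$ on the whole graph, so Theorem~\ref{thm:lb1} gives $0$ globally; one must apply its $\max$ over \emph{induced subgraphs}, which for perfect $G$ is again $0$. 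So \textbf{lower bounds for perfect graphs cannot come from $\chi - \omega$ at all}, and must be combinatorial: the obstacle, and the heart of the proof, is exhibiting in a perfect graph $G$ with $\chi(G) = c$ a family of induced subgraphs forcing color-waste $p$ in every coloring. I would resolve this by letting $G$ be a blow-up: replace each vertex of a path $P_{2p+1}$ (or a suitable perfect graph needing many "redundant" color appearances) by a clique of appropriate size so that $\chi(G) = c$, and prove $\varphi(G) = p$ by analyzing colorings of the blown-up path directly — expecting this blow-up-of-a-small-perfect-witness to be where all the work lies, with the upper bound $\varphi \le p$ following cleanly from Lemma~\ref{lem:ind} and Theorem~\ref{thm:ub}.
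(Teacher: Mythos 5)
Your proposal never converges to a construction, and the reason it does not is a single false belief that steers you away from the correct one: you assert that for a disjoint union of cliques the optimal coloring reuses the palette and achieves $\varphi=0$, ``consistent with Theorem~\ref{thm:char}.'' This is wrong. Even when the palette is reused, an induced subgraph consisting of distinctly colored vertices taken from \emph{different} components is an independent set with chromatic number $1$, so it witnesses a large discrepancy; the paper itself records that $\varphi(K_2\cup K_1)=1$ and $\varphi(K_t\cup K_t)=\lfloor t/2\rfloor$, and Theorem~\ref{thm:char} says $\varphi(G)=0$ forces $G$ to be complete multipartite, which a disjoint union of at least two nontrivial cliques never is. The paper's construction for the first part is exactly the kind of graph you discarded: $G=K_c\cup p\cdot K_p$. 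The upper bound $\varphi(G)\le p$ is immediate from Lemma~\ref{lem:ind} ($\varphi(G)\le\varphi(K_c)+\chi(p\cdot K_p)=0+p$), and the lower bound is the observation you kept circling but never executed: in \emph{any} proper coloring one can greedily pick one vertex from each copy of $K_p$ and one from $K_c$, all with distinct colors (each $K_p$ carries $p$ distinct colors and at most $p-1$ are forbidden at the time it is processed; $K_c$ carries $c\ge p+1$ distinct colors), giving an independent set on $p+1$ colors, hence $\varphi(G)\ge p$. Note that the sizes matter: your variant with $p+1$ copies of $K_c$ does \emph{not} have $\varphi=p$ in general, which is why the small cliques must be $K_p$.

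The second assertion of the theorem, about $\hat{\varphi}$, is not addressed in your write-up at all, and it genuinely needs a different gadget because disjoint unions collapse $\hat{\varphi}$ (connected subgraphs cannot straddle components). The paper takes $G=K_{c-(p+1)}\Join\bigl((p+1)\cdot K_{p+1}\bigr)$: the lower bound $\hat{\varphi}(G)\ge p$ comes from an induced star on $p+2$ distinctly colored vertices (one per copy of $K_{p+1}$ plus one from the central clique), and the upper bound from the explicit optimal coloring that gives all copies of $K_{p+1}$ the same $p+1$ colors, followed by a short count showing every connected induced subgraph wastes at most $p$ colors. Your closing sketch (blow-up of a path $P_{2p+1}$, details deferred) carries no proof of either bound, and your intermediate claims along the way (e.g.\ that $K_c\cup p\cdot\overline{K_c}$ has $\varphi=0$) are also incorrect, since $K_2\cup K_1$ is an induced subgraph there. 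So the proposal has a genuine gap on both halves of the statement: the first half is derailed by the erroneous ``palette reuse gives $\varphi=0$'' step, and the second half is missing entirely.
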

\begin{proof}
Let $\cup$ and $\Join$ denote the standard graph operations disjoint union and join respectively. We now prove the first part of the theorem statement.
Let $p \cdot K_p$ denote the graph which is the disjoint union of $p$ copies of $K_p$.
Consider a graph $G$ which is $K_c\cup p\cdot K_p$ (see Figure~\ref{fig:kcupkp}). That is, $G$ is the disjoint union of $K_c$ and $p$ copies of $K_p$.
Clearly $G$ is a perfect graph and $\chi(G) = c \ge p+1$.
By Proposition~\ref{lem:ind}, we have $\varphi(G)\leq \varphi(K_c) + \chi(p.K_p)$. Since $\varphi(K_c) = 0$, it follows that $\varphi(G) \le p$.
In any proper coloring $r$ of $G$,  we can obtain an independent set of $p+1$ vertices with distinct colors, by choosing one vertex from
each $K_p$ and one vertex from $K_c$. Hence $\varphi_r(G) \ge p$ for any coloring $r$. It follows that $\varphi(G) \ge p$. Recalling that $\varphi(G) \le p$, we have $\varphi(G) = p$.

We now prove the second part.
Following the earlier notation, let $(p+1)\cdot K_{p+1}$ denote the disjoint union of $p+1$ copies of $K_{p+1}$.
Consider a graph $G$ which is $K_{c-(p+1)} \Join (p+1)\cdot K_{p+1}$ (see Figure~\ref{fig:kcpp1kpp1}). That is, $G$ is the join of $K_{c - (p+1)}$ and $(p+1) \cdot K_{p+1}$.
It is straightforward to verify that $G$ is a perfect graph and $\chi(G) = c \ge p+2$.
It remains to show that $\hatphi(G) = p$.
In any proper coloring $r$ of $G$, we can obtain an induced star graph of $p+2$ vertices with distinct colors, by choosing one vertex from each $K_{p+1}$ and one vertex from $K_{c - (p+1)}$. Hence $\hatphi_r(G) \ge p$ for any coloring $r$. It follows that $\hatphi(G) \ge p$.
Consider the optimal coloring $r^*$ of $G$ which assigns the same $p+1$ colors to each
copy of $K_{p+1}$ and remaining $c - (p+1)$ colors to $K_{c - (p+1)}$.
Consider any connected induced subgraph $H$ of $G$. Let $H$ contain $x$ vertices from $K_{c - (p+1)}$. Let $H$ contain a maximum of $y$ vertices from any one of the $p+1$ copies of $K_{p+1}$.  Clearly $\chi(H) = x + y$.  Observing that $H$ can contain at most $x + p+1$ colors of $r^*$, it follows that $|r^*(H)| - \chi(H) \le x + p + 1 - (x + y) \le p + 1 - y \le p$ if $y \ge 1$. If $y= 0$ then $H$ contain only vertices from $K_{c - (p+1)}$ and hence $|r^*(H) - \chi(H)| = 0 \le p$. It follows that $\hatphi_{r^*}(G) \le p$. Recalling that $\hatphi(G) \ge p$, we obtain that $\hatphi(G) = p$.
\end{proof}


\begin{figure}[h]
\centering
\subfloat[Subfigure 1 list of figures text][$G = K_{c} \cup p\cdot K_{p}$]{
\resizebox{5.4cm}{!}{\input{./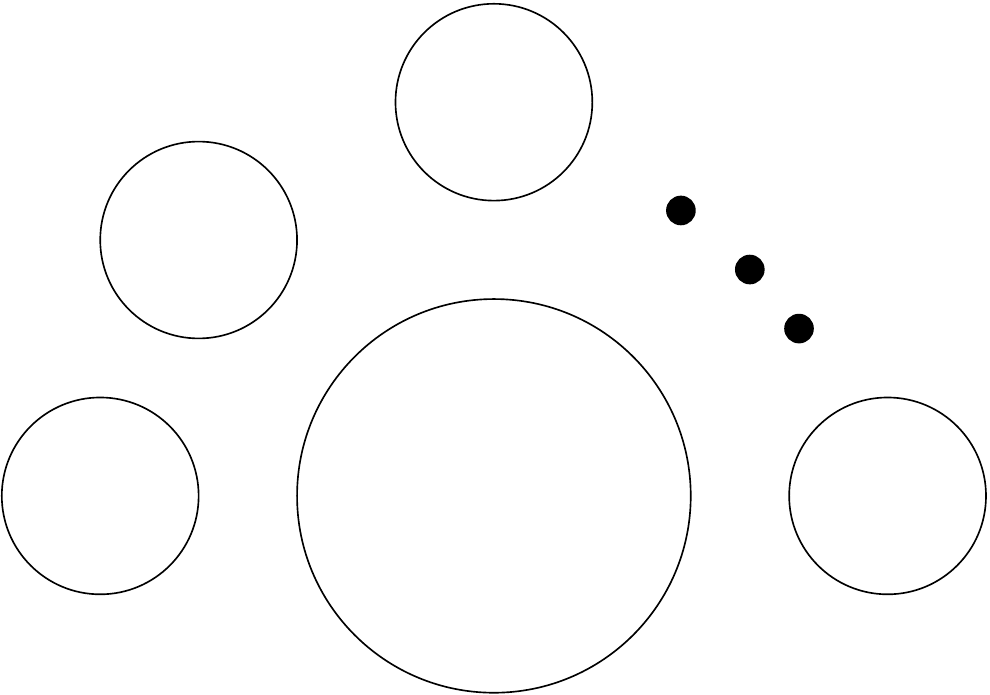_t}}
\label{fig:kcupkp}}
\qquad
\subfloat[Subfigure 2 list of figures text][$G = K_{c-(p+1)} \Join (p+1)\cdot K_{p+1}$]{
\resizebox{5.4cm}{!}{\input{./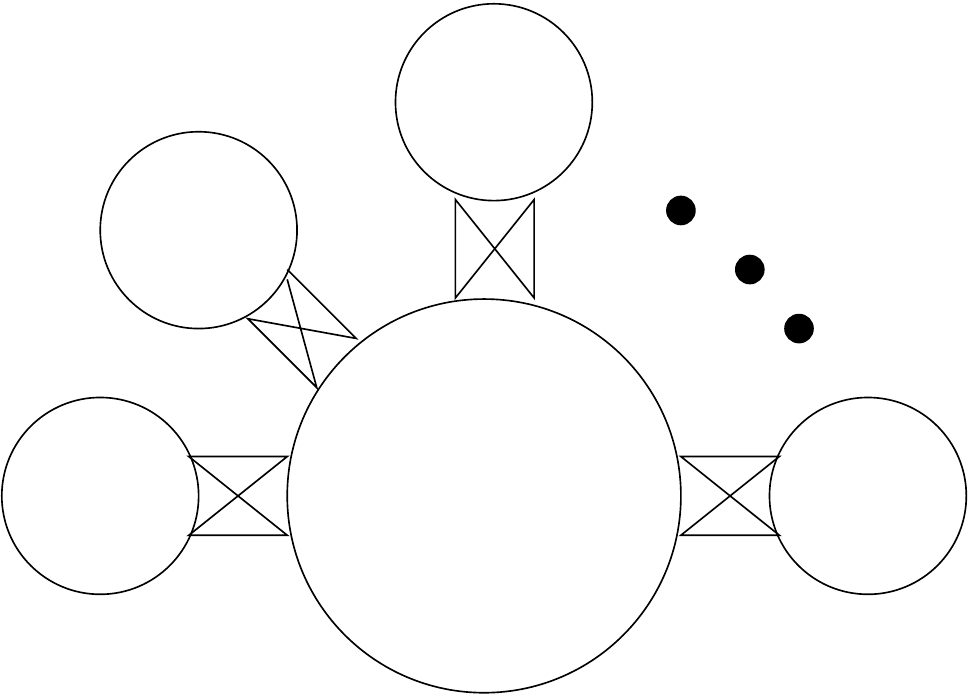_t}}
\label{fig:kcpp1kpp1}}
\caption{Graph in (a) has $\chi(G) = c$ and $\varphi(G) = p$; Graph in (b) has $\chi(G) = c$ and $\hatphi(G) = p$.}
\end{figure}

\section{Relations between $\varphi(G)$ and $\hatphi(G)$}\label{section:on}

The following theorem gives a bound on the gap between $\hatphi(G)$ and $\varphi(G)$.
\begin{theorem}\label{thm:phialpha}
For a connected graph $G$,
$\varphi(G)\leq \hatphi(G)+\alpha(G)-1$.
\end{theorem}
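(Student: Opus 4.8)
The plan is to start with a proper colouring $c$ of $G$ attaining $\hatphi(G)=\hatphi_c(G)$ and to prove the stronger inequality $\varphi_c(G)\le\hatphi(G)+\alpha(G)-1$. First I would extract two easy consequences of connectivity. Since $G$ is a connected induced subgraph of itself, $|c(G)|-\chi(G)\le\hatphi_c(G)=\hatphi(G)$, so $c$ uses at most $\chi(G)+\hatphi(G)$ colours. Also I may assume that $c$ uses the fewest colours among all colourings achieving $\hatphi(G)$: if two colour classes $C_a,C_b$ had no edge between them, then merging them into a single independent class never increases $|c(H)|$ on a connected induced subgraph $H$ and hence does not increase $\hatphi_c$, contradicting minimality; so in $c$ every two colour classes are joined by an edge of $G$.

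Next, by Lemma~\ref{lem:colorclass} I would fix an induced subgraph $H$ with exactly one vertex per colour class and $\varphi_c(G)=|c(H)|-\chi(H)=|c(G)|-\chi(H)$, and let $H_1,\dots,H_m$ be its connected components. Choosing one vertex from each component gives an independent set, so $m\le\alpha(G)$. Since $G$ is connected, $H$ extends to a connected induced subgraph $K=H\cup A$, with $A$ chosen as small as possible; $K$ still uses all $|c(G)|$ colours. Because $K$ is connected, $|c(G)|-\chi(K)\le\hatphi_c(G)=\hatphi(G)$, and because $K$ is obtained from $H$ by adding $|A|$ vertices, $\chi(K)\le\chi(H)+|A|$. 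Hence
\[
\varphi_c(G)=|c(G)|-\chi(H)\le\bigl(|c(G)|-\chi(K)\bigr)+|A|\le\hatphi(G)+|A|,
\]
so the theorem reduces to showing $|A|\le\alpha(G)-1$.

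The routine ingredients are the colour bound $|c(G)|\le\chi(G)+\hatphi(G)$, the merging reduction, Lemma~\ref{lem:colorclass}, the count $m\le\alpha(G)$, and $\chi(K)\le\chi(H)+|A|$. The hard part is establishing $|A|\le\alpha(G)-1$ — that the (at most $\alpha(G)$) components of a well-chosen transversal $H$ can be stitched into one connected induced subgraph using at most $\alpha(G)-1$ extra vertices. Here one must be careful, since this is \emph{not} just ``$m-1$ extra vertices, one per merge'': two components of $H$ containing no adjacent pair of representatives can be far apart in $G$. So the argument should use both that every two colour classes are adjacent (from the minimality of $c$) and the freedom to re-select representatives and to choose $H$ among all transversals realizing $\varphi_c(G)$, e.g.\ one with the fewest components. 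A useful simplification to keep in mind: if $\chi(G)\le\alpha(G)$ the claim is immediate from $\varphi_c(G)\le|c(G)|-1\le\chi(G)+\hatphi(G)-1\le\hatphi(G)+\alpha(G)-1$, so the real work sits in the case $\chi(G)>\alpha(G)$. In that case the natural target is to show that the components of $H$ other than one of largest chromatic number together span at most $\alpha(G)-1$ vertices — equivalently that a single component carries all but at most $\alpha(G)-1$ of the colour classes — by playing the independence number of each component (at most $\alpha(G)-m+1$, since one can add one vertex from every other component) against the fact that the components jointly exhaust all $|c(G)|$ colours.
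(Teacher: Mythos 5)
Your reduction is fine as far as it goes, but the step you yourself flag as ``the hard part'' --- $|A|\le\alpha(G)-1$ --- is exactly what is missing, and it is missing because you are bounding the wrong quantity for the wrong choice of $H$. With $H$ a transversal containing exactly one vertex per colour class (via Lemma~\ref{lem:colorclass}), its components can indeed lie far apart in $G$, so the number of \emph{vertices} needed to stitch them into a connected induced subgraph cannot in general be bounded by $\alpha(G)-1$, or by any function of $\alpha(G)$ alone. Your fallback plan --- re-selecting representatives, choosing a transversal with fewest components, invoking that any two colour classes are adjacent, and arguing by a count that some single component must carry all but at most $\alpha(G)-1$ colours --- is only a heuristic sketch: it is neither carried out nor clearly true, and without it the essential case $\chi(G)>\alpha(G)$ remains open. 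So the proposal does not constitute a proof; the correct preliminary observations ($|c(G)|\le\chi(G)+\hatphi(G)$, completeness of $c$, the easy case $\chi(G)\le\alpha(G)$, and $m\le\alpha(G)$) do not substitute for the missing step.

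The paper closes this gap with two ideas absent from your write-up. First, instead of a one-vertex-per-colour transversal it takes $H$ with the \emph{largest} number of vertices among induced subgraphs satisfying $\varphi_c(G)=|c(H)|-\chi(H)$; any vertex of $G$ not dominated by $H$ could be added without raising $\chi(H)$ or lowering $|c(H)|-\chi(H)$, so by maximality this $H$ is a dominating set of $G$. Second, the quantity controlled is not the number of added vertices but the increase in chromatic number: since $H$ is dominating and $G$ is connected, two components of $H$ can always be joined by adding either a single outside vertex or the two endpoints of an edge of $G\setminus H$, and each such step raises $\chi$ by at most $1$ while strictly decreasing the number of components. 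As the components of $H$ are pairwise non-adjacent, there are at most $\alpha(G)$ of them, so at most $\alpha(G)-1$ merging steps produce a connected $S\supseteq H$ with $\chi(S)\le\chi(H)+\alpha(G)-1$, giving $\hatphi_c(G)\ge |c(S)|-\chi(S)\ge\varphi_c(G)-\alpha(G)+1$ for \emph{every} colouring $c$, and then specializing $c$ to one achieving $\hatphi(G)$ finishes the proof. If you want to salvage your outline, replace your $H$ by this vertex-maximal (hence dominating) choice and bound the chromatic increment per merge rather than $|A|$.
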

\begin{proof}
Consider any proper coloring $c$ of $G$. Let $H$ be an induced subgraph
of $G$ with the largest number of vertices satisfying $\varphi_c(G)=|c(H)|-\chi(H)$.
We observe that $H$ is a dominating set of $G$. Otherwise, we can merely include in $H$ vertices that are not dominated by any vertex in $H$ without increasing its chromatic number. This contradicts the assumption that $H$ has largest number of vertices.

The graph $H$ need not be connected and  $H$ can have at most $\alpha(G)$ components.
We now show that if $H$ is not connected then $H$ can be extended to a connected induced subgraph $S$ by including additional vertices in a straightforward manner.
Let $G \setminus H$ denote the graph induced by vertices outside of $H$.
We recall that $G$ is connected and $H$ is a dominating set. Hence there are following two possibilities.
There exists a vertex $u$ in $G \setminus H$ and including it in $H$ connects two or more components of $H$ in the resulting induced subgraph $H'$.
If no such vertex exists in $G \setminus H$ then there exists an edge in $G \setminus H$ and including both endpoints of that edge
in $H$ connects exactly two components of $H$ in the resulting induced subgraph $H'$.
It is straightforward to verify that in either case $\chi(H') \le \chi(H) + 1$.
Proceeding in this fashion, we extend $H$ to a connected induced subgraph $S$ such that
$\chi(S) \le \chi(H) + \alpha(G) - 1$.
Recalling that $\varphi_c(G)=|c(H)|-\chi(H)$,
it follows that $\hatphi_c(G) \ge |c(S)| - \chi(S) \ge |c(H)| - \chi(H) - \alpha(G) + 1 = \varphi_c(G) - \alpha(G) + 1$.
Since $\varphi_c (G) \leq \hatphi_{c} (G)+\alpha(G)-1$ for any coloring $c$, by considering $c$ to be a
coloring of $G$ such that $\hatphi(G)=\hatphi_{c}(G)$, we get the desired result. $$\varphi(G) \leq \varphi_c (G) \leq \hatphi_c (G)+\alpha(G)-1 = \hatphi(G)+\alpha(G)-1\;.$$
\end{proof}

Theorem~\ref{thm:phialpha} implies that $\varphi(G)$ and $\hatphi(G)$ are close if $\alpha(G)$ is small. For a disconnected graph this may not hold true.
For instance,  let $G$ be of the form $K_t\cup K_t$ for $t\geq 4$. Then, $\varphi(G) = \lfloor t/2\rfloor$, $\hatphi(G) = 0$ and $\alpha(G)=2$.
In the following we derive another bound on $\varphi(G)$  in terms of $\hatphi(G)$ for a class of graphs.

\begin{theorem}\label{thm:phialpha-restricted}
There exists a class of graphs for which $\varphi(G) \le \hatphi(G) + \sqrt{\varphi(G)+1} - 1$.
\end{theorem}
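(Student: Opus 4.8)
The plan is to produce an explicit infinite family $\{G_k\}_{k\ge 2}$ for which in fact \emph{equality} $\varphi(G_k)=\hatphi(G_k)+\sqrt{\varphi(G_k)+1}-1$ holds. Put $b=k^2-k+1$ and let
\[ G_k = \bigl(K_{k-1}\Join(b\cdot K_b)\bigr)\cup K_{b+1}\cup K_{b+2}\cup\cdots\cup K_{b+k-1}, \]
that is, the disjoint union of the graph $H_0:=K_{k-1}\Join(b\cdot K_b)$ (exactly the graph appearing in the proof of Theorem~\ref{thm:tight}, taken with $c=k^2$ and $p=b-1$) together with a ``staircase'' of $k-1$ further pairwise disjoint cliques of sizes $b+1,b+2,\dots,b+k-1$. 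I would prove $\chi(G_k)=k^2$, $\hatphi(G_k)=k^2-k$ and $\varphi(G_k)=k^2-1$; then $\hatphi(G_k)+\sqrt{\varphi(G_k)+1}-1=(k^2-k)+k-1=k^2-1=\varphi(G_k)$, giving the theorem.

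Two of these facts are routine. Since join adds chromatic numbers and disjoint union takes the maximum, $\chi(H_0)=(k-1)+b=k^2$, each $K_{b+i}$ has $b+i\le k^2$ vertices, and the largest clique of $G_k$ has $k^2$ vertices, so $\chi(G_k)=k^2$; hence $\varphi(G_k)\le\chi(G_k)-1=k^2-1$ by Proposition~\ref{prop:trivialubound}. For $\hatphi$, each $K_{b+i}$ contributes $0$, while $\hatphi(H_0)=b-1=k^2-k$ by Theorem~\ref{thm:tight}; since $\hatphi$ of a disconnected graph is the maximum over its components, $\hatphi(G_k)=k^2-k$.

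The substance is the matching lower bound $\varphi(G_k)\ge k^2-1$. Fix an arbitrary proper coloring $c$ of $G_k$; I would construct a rainbow independent set $I$ with $k^2$ vertices, which being independent yields $\varphi_c(G_k)\ge|c(I)|-\chi(I)=k^2-1$. To build $I$, list the $b$ copies of $K_b$ inside $H_0$ first and then the cliques $K_{b+1},\dots,K_{b+k-1}$, and process this list greedily: from the $j$-th clique pick a vertex whose color has not yet been used among the previously selected vertices. This always succeeds, because the $j$-th clique has at least $j$ vertices and hence exhibits at least $j$ distinct colors ($b\ge j$ for the copies of $K_b$, and $b+i=j$ for the clique $K_{b+i}$), whereas only $j-1$ colors have been chosen so far. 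The selected vertices lie one per clique, and every clique other than the copies of $K_b$ is a separate component of $G_k$, so $I$ is independent; it has $b+(k-1)=k^2$ vertices, all colored distinctly. As $c$ was arbitrary, $\varphi(G_k)\ge k^2-1$, which with the earlier upper bound forces $\varphi(G_k)=k^2-1$.

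The step to watch is this greedy construction: it works precisely because the extra clique sizes form a strictly increasing staircase starting just above $b$ (a flatter sequence would stall before $I$ reaches size $k^2$), and the top size $b+k-1=k^2$ has been kept equal to $\chi(G_k)$ so that the trivial upper bound $\varphi\le\chi-1$ meets the lower bound. One should also check the easy points that the chosen one-vertex-per-copy set inside $K_{k-1}\Join(b\cdot K_b)$ is independent (no vertex of the joined $K_{k-1}$ is used), and that the parameters satisfy the hypothesis $c\ge p+2$ of Theorem~\ref{thm:tight}, i.e.\ $k-1\ge1$, which is why I take $k\ge2$; the case $k=1$ is the trivial graph $K_1$, which satisfies the bound as well.
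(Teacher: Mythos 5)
Your proof is correct, but it takes a genuinely different route from the paper. The paper exhibits a \emph{connected} family $G_t$ ($t$ a perfect square) consisting of a $K_t$ with a distinct copy of $K_{t-1}$ attached to each of its vertices; it argues, for every proper coloring $c$, that $\varphi_c(G_t)=t-1$ while a connected induced copy of $G_{\sqrt t}$ carries all $t$ colors, giving $\hatphi_c(G_t)\ge t-\sqrt t$, and then transfers the per-coloring inequality to the graph parameters via the monotonicity of $x-\sqrt{x+1}$ and a coloring achieving $\hatphi$ (this transfer step is needed precisely because $\varphi$ and $\hatphi$ may be attained by different colorings). You instead build a \emph{disconnected} family by taking the join construction from the proof of Theorem~\ref{thm:tight} (with $c=k^2$, $p=k^2-k$) together with a staircase of cliques $K_{b+1},\dots,K_{b+k-1}$, and you pin down both parameters exactly: $\hatphi(G_k)=k^2-k$ comes componentwise (using that $\hatphi$ of a disconnected graph is the maximum over components, and that Theorem~\ref{thm:tight}'s proof evaluates $\hatphi$ of that specific join graph), while $\varphi(G_k)=k^2-1$ follows from Proposition~\ref{prop:trivialubound} plus your greedy rainbow independent transversal of the $k^2$ listed cliques, which is valid since the $j$-th clique in your ordering always exhibits at least $j$ colors and the chosen vertices are pairwise nonadjacent. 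What your approach buys: exact values, equality $\varphi=\hatphi+\sqrt{\varphi+1}-1$ rather than just the inequality, and no per-coloring-to-parameter transfer argument. What the paper's approach buys: a connected witness family, which is the more meaningful setting for the gap between $\varphi$ and $\hatphi$ (for disconnected graphs the paper's own example $K_t\cup K_t$ shows the gap can be huge, so Theorem~\ref{thm:phialpha} and Open Problem 1 are really about connected graphs, and only the connected construction speaks to the tightness of the conjectured bound there). Both establish the theorem as stated; just note explicitly, as you did, that you are invoking the graph constructed in the \emph{proof} of Theorem~\ref{thm:tight}, since the theorem statement itself is only existential.
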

\begin{proof}
Consider the class of graphs
formed by graphs $G_t$ , where $t$ is a perfect square, that have the following structure (see Figure~\ref{fig:gap}).
The graph $G_t$ contains one $K_t$ and $t$ copies of $K_{t-1}$. Each vertex in $K_t$ is connected to all vertices of a distinct $K_{t-1}$.
Clearly, $\chi(G_t) = t$. Consider any proper coloring $c$ of $G_t$. It is easy to see that there exists an independent set of $t$ vertices having distinct colors. Hence $\varphi_c(G_t) = t-1$. It is also straightforward to verify that there exists a connected induced subgraph  $G_{\sqrt{t}}$ in $G_t$ containing $t$ colors. Hence $\hatphi_c(G_t) \geq t-\sqrt{t}$.
It follows that for any coloring $c$, $\varphi_c(G_t)  \le  \hatphi_c(G_t) + \sqrt{t}-1 = \hatphi_c(G_t) + \sqrt{\varphi_c(G_t) + 1} - 1$.
Rearranging the terms, we get 
\begin{equation}\label{eqn:perfectsquare}
\varphi_c(G_t) - \sqrt{\varphi_c(G_t) + 1} \leq \hatphi_c(G_t)  - 1\;.
\end{equation}
By considering $c$ to be a
coloring of $G_t$ such that $\hatphi(G_t)=\hatphi_{c}(G_t)$, we get the desired result as follows.
\begin{eqnarray*}
\varphi(G_t) - \sqrt{\varphi(G_t) + 1} &\leq& \varphi_c(G_t) - \sqrt{\varphi_c(G_t) + 1} \\
& \leq & \hatphi_c(G_t)  - 1\\
& = & \hatphi(G_t)  - 1.
\end{eqnarray*}
The inequality in the first line follows since $f(x) = x - \sqrt{x + 1}$ is an increasing function when $x \geq 0$, the inequality in the second line follows
because of equation (\ref{eqn:perfectsquare}) and the equality in the last line follows by the choice of the coloring $c$.
\end{proof}
\begin{figure}[htp]
\begin{center}
\resizebox{5cm}{!}{\input{./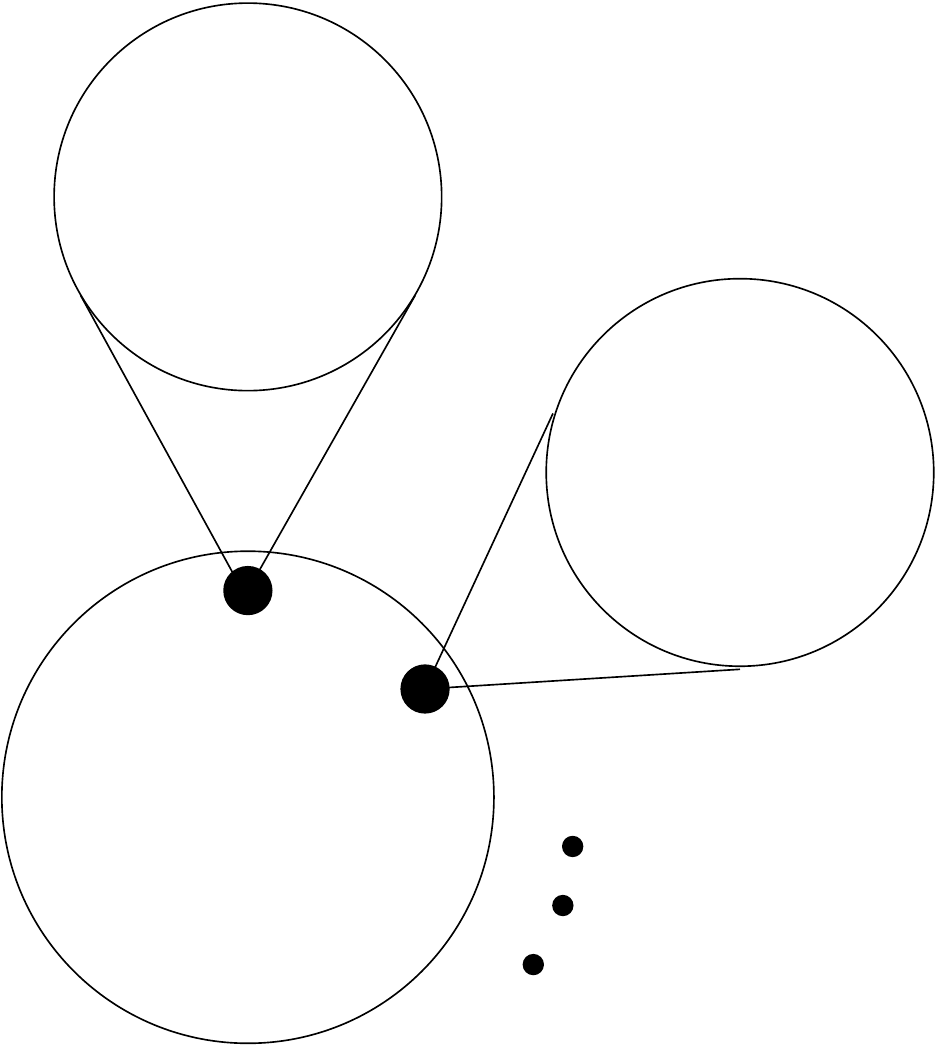_t}}
\caption{Graph $G_t$ where $\varphi(G_{t}) \le \hatphi(G_t) +  \sqrt{t}-1$}
\label{fig:gap}
\end{center}
\end{figure}
\section{Additional Results}\label{section:other}

It is natural to ask whether there exists an optimal coloring of $G$ that achieves $\varphi(G)$ or $\hatphi(G)$.
In the following, we show a class of graphs for which $\varphi_c(G)$ and $\hatphi_c(G)$ obtained by any optimal coloring $c$ is arbitrarily large compared to $\varphi(G)$ and $\hatphi(G)$.
\begin{theorem} \label{thm:opt}
There exists a class of graphs for which the following holds. For any integer $r \ge 4$, there exist a graph $G$ in the class with $\varphi(G) = \hatphi(G) = 1$ whereas for any optimal coloring $c$ of $G$, $\varphi_c(G) = r-1$ and $\hatphi_c(G) = r-2$.
\end{theorem}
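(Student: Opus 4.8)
The plan is to exhibit, for each integer $r\ge 4$, one explicit graph $G_r$ in the class and to verify its two contrasting properties. Let $G_r$ have vertex set $W\cup A$ with $W=\{w_1,\dots,w_r\}$ and $A=\{a_1,\dots,a_r\}$, where $W$ induces a clique $K_r$, the set $A$ is independent, and $a_i$ is adjacent to $w_j$ exactly when $i\ne j$. Since $W$ is a clique, $\chi(G_r)\ge r$; the coloring $w_i\mapsto i$, $a_i\mapsto i$ is proper because $a_i$ avoids only the color of $w_i$ among $\{1,\dots,r\}$, so $\chi(G_r)=r$. The class is $\{G_r : r\ge 4\}$.

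First I would determine all optimal colorings. In any proper $r$-coloring the clique $W$ exhausts the $r$ colors, so after renaming $c(w_i)=i$; then $a_i$, being adjacent to every $w_j$ with $j\ne i$, is forced to take color $i$ as well. Thus the optimal coloring is unique up to renaming, with color classes $\{w_i,a_i\}$. For it, the independent set $A$ is a transversal of the color classes carrying all $r$ colors, so by Lemma~\ref{lem:colorclass} (or directly) $\varphi_c(G_r)\ge |c(A)|-\chi(A)=r-1$; since every induced subgraph uses at most $r$ colors and has chromatic number at least $1$, in fact $\varphi_c(G_r)=r-1$. For $\hatphi_c$, the subgraph induced by $\{w_1\}\cup(A\setminus\{a_1\})$ is a star, hence connected with $\chi=2$, and it carries all $r$ colors, so $\hatphi_c(G_r)\ge r-2$; conversely a connected induced subgraph on at least two vertices has $\chi\ge 2$ and uses at most $r$ colors, so $\hatphi_c(G_r)=r-2$. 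These values hold for every optimal coloring.

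Next I would prove $\varphi(G_r)=\hatphi(G_r)=1$, which forces the good coloring to be non-optimal. Consider the $(r+1)$-coloring $c^*$ with $c^*(w_i)=i$ and $c^*(a_i)=r+1$ for all $i$. Given an induced subgraph $H$ with color set $S$: if $r+1\notin S$ then $H$ is a sub-clique of $W$ and $|c^*(H)|-\chi(H)=0$; if $r+1\in S$, write $T=S\setminus\{r+1\}$, so $H$ consists of $\{w_i:i\in T\}$ together with a non-empty set of $a_j$'s. A short case analysis shows that if the partner $w_j$ of every present $a_j$ also lies in $H$, then each such $a_j$ may reuse the color of $w_j$, so $\chi(H)=|T|$ and the discrepancy is $1$; otherwise some present $a_j$ with $w_j\notin H$ is adjacent to all of $\{w_i:i\in T\}$, so $\{a_j\}\cup\{w_i:i\in T\}$ is a $K_{|T|+1}$, giving $\chi(H)=|T|+1$ and discrepancy $0$. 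Hence $\varphi_{c^*}(G_r)=1$, so $\varphi(G_r)\le 1$. Finally, $\{w_1,w_2,a_3,a_1\}$ induces a paw in $G_r$ (triangle $w_1w_2a_3$ with pendant edge $w_2a_1$), so Theorem~\ref{thm:char} gives $\hatphi(G_r)\ge 1$; with $\hatphi\le\varphi\le 1$ this yields $\varphi(G_r)=\hatphi(G_r)=1$. Since $r\ge 4$, the quantities $\varphi_c(G_r)=r-1$ and $\hatphi_c(G_r)=r-2$ attained by every optimal $c$ are arbitrarily large compared with $1$, as claimed.

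The step I expect to be the main obstacle is the verification that $c^*$ has discrepancy exactly $1$, i.e., that no induced subgraph ever uses more than one color beyond its own chromatic number. This is precisely where the structure of the bipartite part between $W$ and $A$ (the complement of a perfect matching) is used: whenever an $a_j$ supplies the extra color $r+1$ without its partner $w_j$ being present, it is adjacent to all present vertices of $W$ and therefore enlarges the clique number by one, absorbing the extra color; and whenever every present $a_j$ does have its partner present, the $a_j$'s can be recolored with their partners' colors, so only the single global color $r+1$ is truly "extra".
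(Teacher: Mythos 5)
Your proposal is correct and follows essentially the same route as the paper: the same graph $G_r$ (clique $K_r$ plus an independent set joined via the complement of a perfect matching), the same observation that the optimal $r$-coloring is forced and yields $\varphi_c=r-1$, $\hatphi_c=r-2$ via the transversal independent set and the star, the same non-optimal $(r+1)$-coloring giving discrepancy $1$, and the same induced-paw lower bound. The only difference is that you spell out the case analysis the paper leaves as ``straightforward to verify,'' which is a welcome but not substantive addition.
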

\begin{proof}
Consider a class of graphs that contains $G_r$ for each $r \ge 4$ and having the following structure (See Figure \ref{fig:globfig} for  $G_4$).
Graph $G_r$ contains $2r$ vertices of which $r$ vertices induce a $K_r$ and remaining $r$ vertices induce an independent set.
Each vertex in the independent set is connected to all except one distinct vertex in $K_r$. Clearly $\chi(G_r) = r$. Consider any optimal coloring $c$ of $G_r$.
Clearly all $r$ colors appear in the independent set. Similarly, all $r$ colors appear in the induced subgraph formed by a vertex in $K_r$ and all its neighbors in the independent set.
Using Proposition \ref{prop:trivialubound}, it follows that $\varphi_c(G_r) = r-1$ and $\hatphi_c(G_r) = r -2$. Consider a coloring $c'$ of $r+1$ colors that uses $r$ colors to color $K_r$ and uses extra one color for the independent set. It is straightforward to verify that for this coloring, $\varphi_{c'}(G_r) = \hatphi_{c'}(G_r) = 1$. Observing that $G_r$ contains an induced paw graph, from Proposition \ref{prop:prop2} it follows that $\varphi(G_r) \ge 1$ and  $\hatphi(G_r) \ge 1$. Since $\varphi(G_r) \le \varphi_{c'}(G_r)$ and $\hatphi(G_r) \le \hatphi_{c'}(G_r)$, we have $\varphi(G) = \hatphi(G) = 1$. Hence the result.
\end{proof}
\begin{figure}[h]
\centering
\subfloat[Subfigure 1 list of figures text][Unique optimal coloring $c$. $\varphi_c(G) = 3$ and $\hatphi_c(G) = 2$. Star subgraph is darkened.]{
\includegraphics[width=0.3\textwidth]{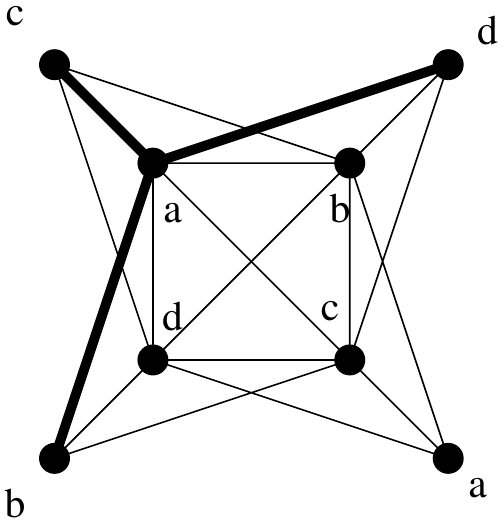}
\label{fig:subfig1}}
\qquad
\subfloat[Subfigure 2 list of figures text][Non optimal coloring $c^\prime$. $\varphi_{c^\prime}(G) = \hatphi_{c^\prime}(G) = 1$. Induced paw is darkened.]{
\includegraphics[width=0.3\textwidth]{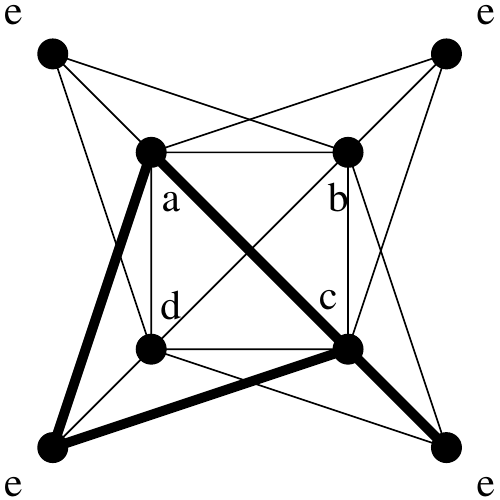}
\label{fig:subfig2}}
\caption{Graph $G_4$. No optimal coloring achieves $\varphi$ and $\hatphi$.}
\label{fig:globfig}
\end{figure}

A \emph{complete coloring} of a graph $G$ is a proper coloring such that every pair of distinct colors is used to color at least one adjacent pair of vertices. Clearly an optimal coloring is also a complete coloring. In Theorem \ref{thm:opt} we showed graph classes for which no optimal coloring achieves $\varphi(G)$ or $\hatphi(G)$. Following proposition states that for any graph $G$,  $\varphi(G)$ and $\hatphi(G)$ are achieved by complete colorings.
\begin{proposition}\label{prop:complete}
For any graph $G$, there exists a complete coloring $c$ with $\varphi_c(G) = \varphi(G)$ and there exists a complete coloring $c^{\prime}$ with $\hatphi_{c^{\prime}}(G) = \hatphi(G)$.
\end{proposition}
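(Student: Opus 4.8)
The plan is to start from a coloring that already attains $\varphi(G)$ and repeatedly \emph{merge} pairs of colors that never occur on adjacent vertices, showing that each merge leaves $\varphi_c(G)$ unchanged; once no mergeable pair remains the coloring is, by definition, complete. Concretely, among all proper colorings of $G$ that use at most $n$ colors (a finite family, which therefore attains the minimum in the definition of $\varphi$), fix one, say $c$, with $\varphi_c(G)=\varphi(G)$. If $c$ is not complete, there are two colors $i\neq j$ such that no edge of $G$ joins a vertex colored $i$ to a vertex colored $j$; recoloring every vertex of color $j$ with color $i$ then yields a proper coloring $c'$ of $G$ using one fewer color (properness is preserved precisely because the merged class has no internal edge).

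The key step is the inequality $\varphi_{c'}(G)\le\varphi_c(G)$. For any induced subgraph $H$ of $G$, the coloring $c'$ is obtained from $c$ by renaming, and a short case analysis shows $|c'(H)|\le|c(H)|$ (the number of colors on $H$ drops by one exactly when both $i$ and $j$ appear on $H$, and is unchanged otherwise). Since $\chi(H)$ does not depend on the coloring, $|c'(H)|-\chi(H)\le|c(H)|-\chi(H)$ for every $H$, and taking the maximum over $H$ gives $\varphi_{c'}(G)\le\varphi_c(G)=\varphi(G)$. On the other hand $\varphi_{c'}(G)\ge\varphi(G)$ because $\varphi(G)$ is the minimum of $\varphi_{c''}(G)$ over all proper colorings $c''$. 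Hence $\varphi_{c'}(G)=\varphi(G)$. Now iterate this merging step; each step strictly decreases the number of colors used, so after finitely many steps we obtain a proper coloring admitting no mergeable pair — i.e.\ a complete coloring — and the value $\varphi(G)$ has been preserved throughout. This gives the required $c$.

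The claim for $\hatphi$ follows by the identical argument with $\calh$ restricted to connected induced subgraphs: merging two non-conflicting colors can only shrink the color set $c(H)$ used on any connected induced subgraph $H$ while leaving $\chi(H)$ fixed, so no merge increases $\hatphi_c(G)$; minimality of $\hatphi(G)$ then forces equality at every step, and iterating from a coloring attaining $\hatphi(G)$ terminates at a complete coloring $c'$ with $\hatphi_{c'}(G)=\hatphi(G)$.

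There is essentially no hard step here. The only point that needs (minor) care is to restrict attention at the outset to colorings using a bounded number of colors, so that the minimum defining $\varphi$ (respectively $\hatphi$) is genuinely attained before the merging process begins; after that, each observation — properness of the merged coloring, monotonicity of $|c(H)|$ under merging, and the lower bound coming from the definition of $\varphi$ and $\hatphi$ — is immediate.
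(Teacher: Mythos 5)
Your proposal is correct and uses essentially the same argument as the paper: merge color classes with no edges between them to obtain a complete coloring, observe that merging can only decrease $|c(H)|$ for every (connected) induced subgraph $H$ while $\chi(H)$ is unchanged, and conclude by minimality of $\varphi(G)$ (resp.\ $\hatphi(G)$) that the value is preserved. The paper phrases this as a one-step contradiction rather than an iteration, but the content is identical.
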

\begin{proof}
Assume by contradiction that there exists no complete coloring that achieves $\varphi(G)$. Consider a non-complete coloring $c$ which achieves $\varphi(G)$. By merging color classes that are mutually independent into a single color, we can arrive at a complete coloring $c'$ of $G$.
Clearly, for any induced subgraph $H$, we have $|c(H)| - \chi(H) \ge |c'(H)| - \chi(H)$ implying that $\varphi_c(G) \ge \varphi_{c'}(G)$, which is a contradiction. We can argue similarly for $\hatphi(G)$.
\end{proof}

We recall that the \emph{achromatic number} of a graph $G$ is the maximum number of colors that can be used in any complete coloring of $G$.
The following proposition is a direct consequence of Proposition \ref{prop:complete}
\begin{proposition}\label{prop:achro}
For a graph $G$, let $c$ (or $c'$) be a coloring with minimum number of colors that achieves $\varphi(G)$ ($\hatphi(G)$). Then $|c(G)|$ ($|c'(G)|$) is at most the achromatic number of $G$.
\end{proposition}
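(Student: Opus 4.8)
The plan is to show that a \emph{color-minimal} coloring achieving $\varphi(G)$ must already be a complete coloring, after which the claimed bound is immediate from the definition of the achromatic number. So first I would let $c$ be a proper coloring of $G$ with $\varphi_c(G) = \varphi(G)$ that uses the fewest colors among all such colorings. Suppose, for contradiction, that $c$ is not complete. Then some pair of color classes is mutually independent, i.e.\ no edge of $G$ joins them, and merging this pair into a single color yields a proper coloring $c'$ of $G$ with strictly fewer colors than $c$.

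Next I would invoke the inequality already established in the proof of Proposition~\ref{prop:complete}: merging mutually independent color classes does not increase $\varphi$, since for every induced subgraph $H$ we have $|c'(H)| - \chi(H) \le |c(H)| - \chi(H)$. Hence $\varphi_{c'}(G) \le \varphi_c(G) = \varphi(G)$. On the other hand, $\varphi_{c'}(G) \ge \varphi(G)$ because $\varphi(G)$ is the minimum of $\varphi_{\cdot}(G)$ over all proper colorings. Therefore $\varphi_{c'}(G) = \varphi(G)$, so $c'$ also achieves $\varphi(G)$ but uses fewer colors, contradicting the minimality of $c$. Consequently $c$ is a complete coloring, and since the achromatic number is by definition the maximum number of colors used in any complete coloring of $G$, we conclude $|c(G)|$ is at most the achromatic number of $G$.

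The argument for $\hatphi$ is identical word for word, using the $\hatphi$-part of Proposition~\ref{prop:complete} in place of the $\varphi$-part: a color-minimal coloring $c'$ achieving $\hatphi(G)$ cannot have two mutually independent color classes, so it is complete, and hence $|c'(G)|$ is at most the achromatic number.

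I do not expect a genuine obstacle here. The only points that require a moment's care are that merging two mutually independent color classes does indeed produce a proper coloring (clear, since no new monochromatic edge is created), and that the relevant monotonicity inequality $\varphi_{c'}(G) \le \varphi_c(G)$ (resp.\ $\hatphi_{c'}(G) \le \hatphi_c(G)$) is precisely the one proved in Proposition~\ref{prop:complete}, so it may simply be cited rather than reproved.
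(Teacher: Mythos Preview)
Your proof is correct and is essentially the argument the paper has in mind; the paper simply declares the result a ``direct consequence of Proposition~\ref{prop:complete}'' without spelling it out. The one-line reasoning implicit in the paper is marginally different from yours: since Proposition~\ref{prop:complete} furnishes a complete coloring $c^*$ with $\varphi_{c^*}(G)=\varphi(G)$, minimality of $c$ gives $|c(G)|\le |c^*(G)|\le$ achromatic number, so one need not re-argue that $c$ itself is complete (though your argument does establish that slightly stronger fact).
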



\subsection{Chromatic Discrepancy of Mycielski Graphs}

For an undirected graph $G$ on $n$ vertices and $m$ edges, the \emph{Mycielskian} $M(G)$ of $G$ \cite{mycielski1955, simonyi2006local} is defined as a graph on $2n + 1$ vertices and $3m + n$ edges and having the following structure. Let the vertex set $V(G)$ of $G$ be $\{v_1, \ldots, v_n\}$. The vertex set of $M(G)$ comprises three parts, namely,   $V = V(G)$,  $U = \{u_1, \ldots, u_n\}$ and  one extra vertex $w$.
The edge set of $M(G)$ contains (i) all edges of $G$, (ii) edge $(w, u_i)$ for each $u_i$ and (iii) two edges $(v_i, u_j), (v_j, u_i)$ for each edge $(v_i, v_j)$ in $G$.
It is known that $\chi(M(G)) = \chi(G)+1$ and if $G$ is triangle free then $M(G)$ is also triangle free. The \emph{Mycielski sequence} of graphs $M_2, M_3, \ldots $ is defined as: $M_2=K_2$, and  Mycielski graph of order $k$ for $k \ge 3$ is given by $M_k = M(M_{k-1})$. It is known that for any $k \ge 2$,  $M_k$ is triangle free and has chromatic number $k$.

\begin{theorem}\label{thm:myciel}
For Mycielski graph $M_k$ of order $k$, $\varphi(M_k) = \hatphi(M_k) = k-2$.
\end{theorem}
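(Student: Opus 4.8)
Since $\hatphi(M_k)\le\varphi(M_k)$ always and $\chi(M_k)=k$, it suffices to prove the two bounds $\varphi(M_k)\le k-2$ and $\hatphi(M_k)\ge k-2$: together they force $\varphi(M_k)=\hatphi(M_k)=k-2$. The plan is to handle the upper bound by exhibiting one good coloring of $M_k$ built recursively along the Mycielski sequence, and the lower bound by showing every proper coloring of $M_k$ contains a connected bipartite induced subgraph that sees all $k$ colours.

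\textbf{Upper bound.} I would construct, by induction on $k$, a proper $k$-colouring $c_k$ of $M_k$ such that \emph{no independent set of $M_k$ is rainbow} (i.e.\ uses all $k$ colours). For $M_2=K_2$ the proper $2$-colouring works, as its independent sets are singletons. Given such a $c_{k-1}$ of $M_{k-1}$ on colours $\{1,\dots,k-1\}$, extend it to $M_k=M(M_{k-1})$ by keeping $c_k(v_i)=c_{k-1}(v_i)$ on $V$, setting $c_k(u_i)=c_{k-1}(v_i)$ on each twin $u_i$ (proper, since $u_i$'s neighbours are $w$ and the $v_j$ with $v_j\sim v_i$), and giving the apex $w$ the fresh colour $k$. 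Any rainbow independent set $I$ of $(M_k,c_k)$ must contain $w$, the unique vertex of colour $k$; since $w$ dominates $U$, $I$ avoids $U$; hence $I\setminus\{w\}\subseteq V$ is an independent set of $M_k[V]=M_{k-1}$ using all of $\{1,\dots,k-1\}$, contradicting the choice of $c_{k-1}$. Now for any induced subgraph $H$ of $M_k$: if $H$ is independent it misses a colour, so $|c_k(H)|-\chi(H)\le(k-1)-1=k-2$; otherwise $\chi(H)\ge2$, so again $|c_k(H)|-\chi(H)\le k-2$. Thus $\varphi(M_k)\le\varphi_{c_k}(M_k)\le k-2$, and hence $\hatphi(M_k)\le k-2$ too.

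\textbf{Lower bound.} It suffices to show that every proper colouring $c$ of $M_k$ admits a \emph{connected bipartite} induced subgraph seeing at least $k$ colours: as $M_k$ is triangle-free such a subgraph has chromatic number $2$, so $\hatphi_c(M_k)\ge k-2$ and therefore $\varphi_c(M_k)\ge\hatphi_c(M_k)\ge k-2$ for every $c$. I would prove this claim by induction on the Mycielski structure. Restricting $c$ to $V\cong M_{k-1}$ and applying the inductive claim gives a connected bipartite induced subgraph $H_0\subseteq V$ with $|c(H_0)|\ge k-1$ and bipartition $(A_0,B_0)$ (both parts nonempty, since $|c(H_0)|\ge2$ for $k\ge3$). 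Pick $v_i\in H_0$, say $v_i\in A_0$, and adjoin its twin $u_i$ on side $A_0$ — legal because the neighbours of $u_i$ inside $H_0$ are exactly $N_{H_0}(v_i)\subseteq B_0$ — together with the apex $w$ on side $B_0$, whose only neighbour in the new graph is $u_i$. This yields a connected bipartite induced subgraph $H_0\cup\{u_i,w\}$ with colour set $c(H_0)\cup\{c(u_i),c(w)\}$, so we are finished whenever $c(w)\notin c(H_0)$ or $c(u_i)\notin c(H_0)$ for some $v_i\in H_0$.

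\textbf{Main obstacle.} The hard part is the \textbf{bad case} of the extension step: $c(w)\in c(H_0)$ \emph{and} $c(u_i)\in c(H_0)$ for every $v_i\in H_0$ at once. Here one must use that $c$ employs at least $k$ colours, so some colour $\delta\notin c(H_0)$ occurs elsewhere in $M_k$ — on a $V$- or $U$-vertex outside $H_0$ — and then splice that vertex (or its twin, or a short path to it through the apex) into $H_0$ while preserving connectivity, bipartiteness, and being induced; the naive splice can create an odd cycle and lose one colour's worth of $\chi$, which is precisely the unit of deficit we cannot afford. Overcoming this likely requires strengthening the inductive hypothesis to carry extra slack (e.g.\ the witness subgraph may be required to avoid a prescribed colour, or to contain the apex of $M_k$). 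I expect essentially all the difficulty to lie in closing this one-colour gap, since plain monotonicity only gives $\hatphi(M_k)\ge\hatphi(M_{k-1})=k-3$; note also that the generic tool Proposition~\ref{prop:local} is insufficient here, as the local chromatic number of $M_k$ need not reach $k$.
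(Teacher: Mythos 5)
Your upper bound is fine and is essentially the paper's own argument: you use the same recursive colouring (each twin $u_i$ inherits the colour of $v_i$, the apex $w$ gets a fresh colour), and your rainbow-independent-set induction is exactly the detail the paper dismisses as ``straightforward''; from it $\varphi(M_k)\le k-2$ follows as you say. The problem is the lower bound, which you do not actually prove. Your induction on the Mycielski structure stalls at precisely the ``bad case'' you name ($c(w)\in c(H_0)$ and $c(u_i)\in c(H_0)$ for every $v_i\in H_0$), and you only speculate that a strengthened inductive hypothesis would ``likely'' repair it, without formulating or verifying such a hypothesis (you also never state a base case). As written, $\hatphi(M_k)\ge k-2$ is unestablished, so the theorem is not proved.

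Moreover, the route you explicitly dismiss is the one the paper takes, and your reason for dismissing it is wrong. The paper invokes the result cited from \cite{simonyi2006local} that $\psi(M(G))=\psi(G)+1$; starting from $\psi(M_2)=2$ this gives $\psi(M_k)=k$, and since $M_k$ is triangle-free, Proposition~\ref{prop:local} yields $\varphi(M_k)\ge\hatphi(M_k)\ge\psi(M_k)-2=k-2$ in one line. Your closing claim that ``the local chromatic number of $M_k$ need not reach $k$'' contradicts this cited result and is what steered you away from the short proof; if you want to avoid the citation, you must genuinely close the combinatorial gap above, which you have not done. (Also note that $\hatphi(M_k)\le k-2$ follows immediately from $\chi(M_k)=k$ and Proposition~\ref{prop:trivialubound}; your colouring is only needed for the $\varphi$ upper bound, where it coincides with the paper's.)
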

\begin{proof}
It is proved in \cite{simonyi2006local} that the local chromatic number for Mycielskian $M(G)$ of any graph $G$ is given by  $\psi(M(G)) = \psi(G)+1$. Since $\psi(M_2)=2$, it follows that $\psi(M_k) = k$ for all $k \ge 2$. Recalling that $M_k$ is triangle free, using Proposition~\ref{prop:local}, we obtain that $\varphi(M_k)\geq \hatphi(M_k) \geq k-2$. Since $\chi(M_k) = k$, by Proposition~\ref{prop:trivialubound} we have $\hatphi(M_k) = k-2$.

We now show that $\varphi(M_k) = k-2$.
Define an optimal coloring $C_k$ of $M_k$ recursively as follows.
Apply coloring $C_{k-1}$ to the induced subgraph $M_{k-1}$ in $M_k$.
We recall that the vertex set of $M_k$ comprises $V = \{v_1, \ldots, v_r\}$ corresponding to vertices of $M_{k-1}$, $U = \{u_1, \ldots, u_r\}$ and $w$.
Color vertex  $u_j$ with the color of $v_j$. Assign a new color to $w$.
It is straightforward to prove by induction for this coloring, there exists no independent set of $k$ vertices with distinct colors in $M_k$.
Since the number of colors used by $C_k$ is $k$, it follows that for any induced subgraph $H$ of $M_k$, $\varphi_{C_k}(H) \le k-2$.
Recalling that $\varphi(M_k) \ge k -2$, we obtain that $\varphi(M_k) = k-2$.
\end{proof}
\section{Complexity Aspects}\label{section:complex}
In this section we show that computing $\varphi(G)$ and $\hatphi(G)$ is NP-hard.  We show this using reduction from the problem of computing chromatic
number of a graph. We use the following lemma to prove the result.

\begin{lemma}\label{claim:nphard}
Let $G$ be a connected graph on $n\geq 2$ vertices. Consider a graph $G'$ which has one copy of $G$ and one copy of $K_{2n}$, and
each vertex in $G$ is adjacent to two distinct vertices in $K_{2n}$ (see Figure \ref{fig:nphard}). Consider any coloring of $G'$.
Let $G$ contain $t$ colors. Then we can obtain a connected induced subgraph $H$ of $G'$ such that $\chi(H) = t$ and $H$ contains $2t$ colors.
\end{lemma}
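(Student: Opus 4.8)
The plan is to start from an arbitrary proper coloring of $G'$, look at the $t$ colors that appear on the copy of $G$, and manufacture a connected induced subgraph that simultaneously keeps all $t$ of these colors (so its chromatic number is forced to be at least $t$) and picks up exactly $t$ extra colors from inside $K_{2n}$, without ever raising the chromatic number above $t$. First I would pick, inside $G$, a set of vertices $D$ that uses all $t$ colors of $G$; since $G$ is connected I can then enlarge $D$ to a connected induced subgraph $H_0$ of $G$ that still uses exactly $t$ colors (adding vertices of already-present colors never increases $|c(H_0)|$, and connectivity of $G$ lets me connect everything through a spanning tree). At this stage $\chi(H_0)\le t$ trivially, and $|c(H_0)|=t$, so $H_0$ contributes the ``$t$ colors'' half; the work is to bolt on a clique of size $t$ from $K_{2n}$ that has $t$ genuinely new colors and is completely joined to $H_0$.

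Next I would exploit the large size of $K_{2n}$. Every vertex of $H_0$ has at most two neighbours in $K_{2n}$, and $|H_0|\le n$, so the vertices of $K_{2n}$ that are adjacent to \emph{some} vertex of $H_0$ number at most $2n$ — but I actually want a set $S\subseteq V(K_{2n})$ of size exactly $t$ that is adjacent to \emph{every} vertex of $H_0$, i.e. lies in the common neighbourhood. A cleaner route: instead of forcing full adjacency, I only need $H_0\cup S$ connected and $\chi(H_0\cup S)\le t$. Since $K_{2n}$ is a clique, any $S\subseteq V(K_{2n})$ of size $t$ has $\chi(G'[S])=t$; and since the coloring is proper, $S$ receives $t$ distinct colors. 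I then choose $S$ to consist of the $t$ highest-multiplicity-free colors — more precisely, I greedily pick $t$ vertices of $K_{2n}$ whose colors are all different from the $t$ colors on $H_0$: this is possible because $K_{2n}$ itself uses at least $2n\ge t+n\ge t+t$ distinct colors (all vertices of a clique get distinct colors, and $n\ge t$), so at least $2n-t\ge t$ of them avoid $c(H_0)$. Now $c(H_0)$ and $c(S)$ are disjoint, giving $|c(H_0\cup S)| = 2t$.

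Finally I must check $\chi(H_0\cup S)\le t$ and connectivity. For the chromatic number: $H_0$ is a subgraph of $G$, which is connected but I only know $\chi(H_0)\le t$ (it uses $t$ colors in a proper coloring, so in fact $\chi(H_0)\le t$); $S$ is a clique of size $t$; and the edges between $H_0$ and $S$ form a subgraph of a blow-up where each $H_0$-vertex has at most two neighbours in $S$. The key point is that the coloring $c$ restricted to $H_0\cup S$ is itself a proper coloring using exactly $2t$ colors — that only shows $\chi\le 2t$, which is not good enough. So the real argument for $\chi(H_0\cup S)\le t$ must instead be: recolor. Take the original $t$-coloring classes of $G$ restricted to $H_0$ as $t$ color classes; then recolor the $t$ clique-vertices of $S$ by reusing these same $t$ colors, assigning to each $s\in S$ a color not used by its (at most two) neighbours in $H_0$ and not yet used on $S$ — feasible since $|H_0|$-neighbourhood of $s$ blocks $\le 2$ colors and the previously-colored vertices of $S$ block $\le t-1$, but we have $t$ colors, so $2+(t-1)\ge t$ can fail. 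To get slack I instead process $S$ and $H_0$ together: order is the obstacle. I expect the main obstacle to be exactly this simultaneous coloring/connectivity bookkeeping — showing $\chi(H)=t$ rather than merely $\le 2t$ — and I would handle it by first choosing $H_0$ to be small (a spanning-tree path through a transversal of the $t$ color classes, so $|H_0|\le$ a controlled bound), then invoking the abundance of vertices in $K_{2n}$ to select $S$ inside a common neighbourhood of $H_0$ via a counting argument (each of the $\le |H_0|$ vertices of $H_0$ excludes $\le 2$ clique vertices as non-neighbours, leaving $\ge 2n-2|H_0|\ge t$ vertices in the common neighbourhood when $|H_0|\le n-t/2$), after which $H_0\cup S$ is connected, $H=G'[V(H_0)\cup S]$ has $\omega(H)\ge t$ hence (by properness of $c$) exactly $2t$ colors and, since $K_{2n}$ dominates and $H_0$ has bounded size, $\chi(H)=t$ can be certified directly from a greedy recoloring that now has enough room.
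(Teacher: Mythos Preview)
Your proposal has a genuine gap stemming from a misreading of the construction of $G'$. In $G'$, each vertex of $G$ has exactly two neighbours in $K_{2n}$ and, since $|G|=n$ and $|K_{2n}|=2n$, each vertex of $K_{2n}$ has \emph{exactly one} neighbour in $G$. You use the opposite direction in two places. First, your ``common neighbourhood'' argument collapses: any two distinct vertices of $H_0\subseteq G$ have disjoint neighbourhoods in $K_{2n}$, so the common neighbourhood of $H_0$ inside $K_{2n}$ is empty whenever $|H_0|\ge 2$; the inequality ``$\ge 2n-2|H_0|$ vertices in the common neighbourhood'' is false. Second, in the recolouring step you say each $s\in S$ has ``at most two'' neighbours in $H_0$; in fact it has at most one, which makes the greedy easier but still not automatic for the last vertex.

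The paper avoids all of the connectivity trouble by taking $H_0=G$ in its entirety rather than a small transversal. Since $G$ is connected and every $s\in K_{2n}$ has a neighbour in $G$, the induced subgraph $H=G'[V(G)\cup S]$ is connected for \emph{any} $S\subseteq V(K_{2n})$, and it already uses the $t$ colours of $G$. One then picks $S$ of size $t$ inside the set $W$ of $K_{2n}$-vertices whose colours avoid the $t$ colours on $G$ (possible since $|W|\ge 2n-t\ge t$), giving $2t$ colours on $H$. The point you are missing for $\chi(H)\le t$ is this: writing $\pi(s)$ for the colour of the unique $G$-neighbour of $s$, one must choose $S$ so that $\{\pi(s):s\in S\}$ has at least two elements. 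The paper proves by a short counting contradiction that $\pi$ is not constant on $W$, so such an $S$ exists. Then a greedy recolouring of $S$ with the $t$ colours of $G$ works for the first $t-1$ vertices (each $s$ forbids only $\pi(s)$ and the previously used colours), and the last vertex is handled by a single swap with some $v\in S$ having $\pi(v)\neq\pi(u)$.
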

\begin{proof}
Since $G$ is connected, we have $2 \le \chi(G) \le t \le n$.
Let $C_G = \{1, 2, \ldots, t\}$ denote the set of colors in $G$. Clearly, there are at least $2n -t$ vertices in the $K_{2n}$ whose colors are different from those in $C_G$. Let $W$ denote the set of these vertices in the $K_{2n}$. Let $U$ denote the remaining set of (at most $t$) vertices in the $K_{2n}$.
\begin{figure}[htp]
\begin{center}
\resizebox{5cm}{!}{\input{./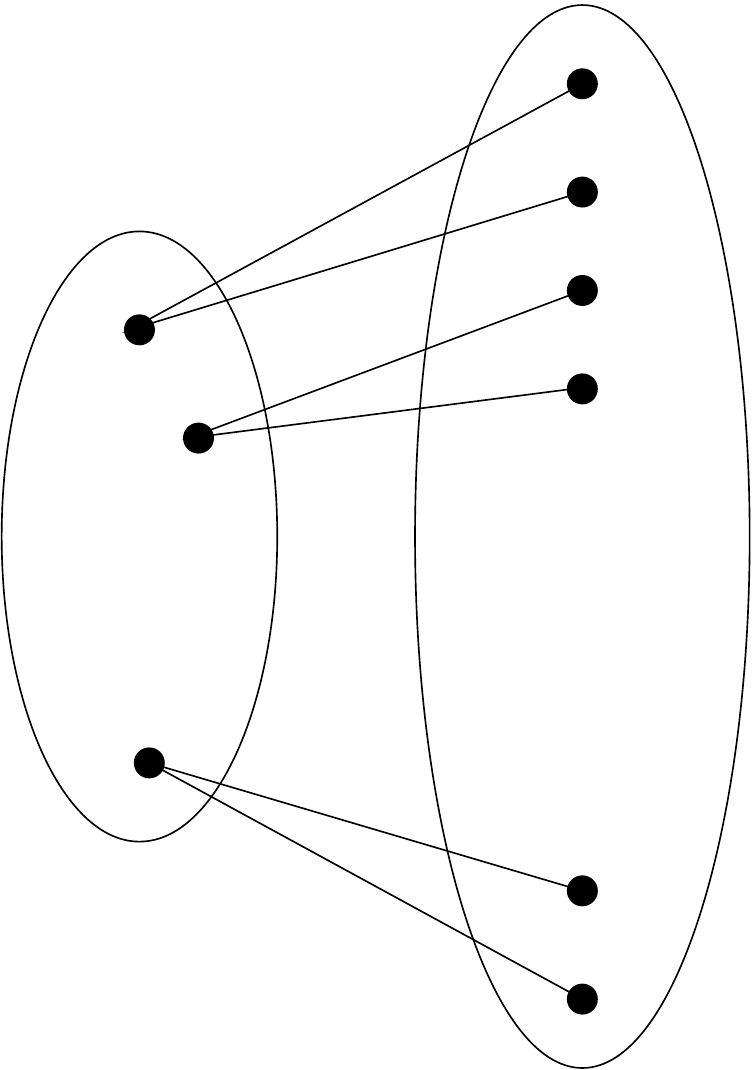_t}}
\caption{Graph $G'$.}
\label{fig:nphard}
\end{center}
\end{figure}
We note that each vertex $v$ in the $K_{2n}$ has exactly one neighbor in $G$.
Hence, for each vertex $v$ in the $K_{2n}$, we define $\pi(v) \in C_G$ as the color of the neighbor of $v$ in $G$.
We now show that it is not possible to have same $\pi(w)$ value for every vertex $w \in W$.
By contradiction, we assume otherwise.
That is, all neighbors of $W$ in $G$ are colored with a single color from $\{1, \ldots, t\}$.
Hence the vertices in $G$ that are colored by the remaining $t-1$ colors are such that their neighbors in $K_{2n}$ is contained in $U$. Recalling that each vertex in $G$ has two neighbors in the $K_{2n}$ and that $|U| \le t$, it follows that $2(t -1) \le |U| \le t$, which implies that $t \le 2$. Recalling that $t \ge 2$, the only remaining possibility is that $t = |U| = 2$. This case can be restated as $G$ contains only two colors $\{1, 2\}$ and some vertex $v$ in $G$ has two neighbors in $U$ that are colored with distinct colors from $\{1, 2\}$, which is a contradiction.
Hence the neighbors of $W$ in $G$ contain two or more colors.

Recalling that $|W| \ge 2n -t$ and $t \le n$, we obtain that $|W| \ge t$.
It follows that
we can choose a set $S$ of $t$ vertices from $W$ such that their neighbors in $G$ contain two or more colors.
Clearly, vertices in $S$ have $t$ distinct colors and all of them are different from the $t$ colors $\{1, \ldots, t\}$ in $G$.
Let $H$ be the subgraph of $G^{\prime}$ induced by vertices of $G$ and $S$ together.
Clearly, $\chi(H) \geq t$ and $H$ contains $2t$ colors. It is also clear that $H$ is connected. We now conclude the proof by showing that $\chi(H) = t$ by exhibiting a $t$ coloring of $H$. Retaining the coloring of vertices in $G$, we color the vertices in $S$
one by one with any of the remaining valid colors from $\{1, \ldots, t\}$. We observe that to color the next vertex $w$ in $S$, we can use any color except $\pi(w)$ from the remaining colors. Hence $t - 1$ vertices in $S$ can be easily colored in this manner. Let $u$ be the last vertex  in $S$ to be colored and let $j$ be the single remaining color. If $j \not= \pi(u)$ then color $u$ with $j$. If $j = \pi(u)$ then choose another vertex $v$ in $S$ such that $\pi(u) \not= \pi(v)$. We recall that such a $v$ exists in $S$. Now the assign color of $v$ to $u$ and color $v$ with $j$. It is straightforward to verify that this is a valid $t$ coloring of $H$.
\end{proof}

\begin{theorem}\label{thm:nphard}
Given a graph $G$, finding $\varphi(G)$ and $\hatphi(G)$ are NP-hard.
\end{theorem}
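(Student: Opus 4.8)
The plan is to reduce from the classical \textsc{Chromatic Number} problem using the construction $G \mapsto G'$ from Lemma~\ref{claim:nphard}. Given a connected graph $G$ on $n \ge 2$ vertices (connectivity is without loss of generality since computing $\chi$ is NP-hard on connected graphs), form $G'$ by attaching a clique $K_{2n}$ and joining each vertex of $G$ to two distinct vertices of $K_{2n}$. First I would record the basic parameters of $G'$: since $G$ is connected with $2 \le \chi(G) \le n$ and each vertex of $G$ has exactly two neighbours in the $K_{2n}$, one checks that $\chi(G') = \max\{2n, \chi(G)+2\} = 2n$ (the clique forces $2n$ colours, and $2n$ colours suffice: colour $K_{2n}$ arbitrarily, then greedily colour $G$ avoiding at most $\deg_G(v)+2 \le 2n-1$ forbidden colours per vertex, which is possible since $n \ge 2$ means we never run out). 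The key point is that $2n$ colours for $G'$ are plentiful relative to $G$.

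The heart of the argument is to show that $\hatphi(G') $ (and $\varphi(G')$) encodes $\chi(G)$ exactly. For the lower bound: take any proper colouring $c$ of $G'$. The induced subgraph $G$ uses some number $t \ge \chi(G)$ of colours; by Lemma~\ref{claim:nphard} there is a connected induced subgraph $H$ of $G'$ with $\chi(H) = t$ but using $2t$ colours, so $\hatphi_c(G') \ge |c(H)| - \chi(H) = 2t - t = t \ge \chi(G)$. Since this holds for every proper colouring, $\hatphi(G') \ge \chi(G)$, and hence $\varphi(G') \ge \chi(G)$ too. For the matching upper bound I would exhibit a good colouring: colour $K_{2n}$ with colours $\{1,\dots,2n\}$ and colour $G$ optimally with $\chi(G)$ colours drawn from this same palette, chosen greedily to stay proper across the join edges (again possible since each vertex of $G$ sees $\deg_G(v)+2 \le 2n-1$ forbidden colours). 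This colouring $c^{*}$ uses exactly $2n$ colours. For any (connected) induced subgraph $H$ of $G'$, if $H$ meets the clique in $x$ vertices and meets $G$ using a set of colours of size $y$, then $\chi(H) \ge x$ (from the clique) and $\chi(H) \ge y$; meanwhile $|c^{*}(H)| \le x + y$, so $|c^{*}(H)| - \chi(H) \le \min\{x,y\} \le y \le \chi(G)$. Hence $\varphi_{c^{*}}(G') \le \chi(G)$, giving $\varphi(G') \le \chi(G)$ and therefore $\hatphi(G') \le \varphi(G') \le \chi(G)$.

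Combining the two directions, $\varphi(G') = \hatphi(G') = \chi(G)$. Since $G'$ is constructed from $G$ in polynomial time (it has $n + 2n = 3n$ vertices), an algorithm computing either $\varphi$ or $\hatphi$ would compute $\chi(G)$, so both parameters are NP-hard. I would also note the decision-problem phrasing: the language $\{(G',k) : \varphi(G') \ge k\}$ (respectively with $\hatphi$) is NP-hard since $(G,k) \mapsto (G',k)$ is a reduction from \textsc{Chromatic Number}'s decision version.

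The main obstacle is getting the upper bound $\varphi(G') \le \chi(G)$ cleanly, i.e.\ verifying that the colouring $c^{*}$ really does cap the discrepancy of \emph{every} induced subgraph at $\chi(G)$; this requires the careful case split on whether $H$ hits the clique and the observation that the clique contributes a matching amount to both the colour count and the chromatic number, so the only "excess" colours $H$ can accumulate come from the $G$-side and those are at most $\chi(G)$ in number. The subtlety is that $H$ restricted to $G$ need not be connected or induce all of $G$, but we only ever use the crude bound "number of colours on the $G$-part $\le \chi(G)$", which always holds. One also has to make sure the greedy colouring of $G$ within the fixed $2n$-palette is actually realizable with only $\chi(G)$ distinct colours on $G$ — here I would colour $G$ first with an optimal $\chi(G)$-colouring using colours $\{1,\dots,\chi(G)\} \subseteq \{1,\dots,2n\}$ and only then colour $K_{2n}$, avoiding for each clique vertex the at most two colours forced by its $G$-neighbours; since $2n$ colours are available for a clique of size $2n$ with at most one forbidden colour per vertex, a proper completion exists (a system-of-distinct-representatives / Hall-type or simply a direct greedy argument works because the forbidden sets have size $\le 1$).
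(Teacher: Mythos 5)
Your proposal is correct and follows the paper's reduction exactly: the same construction $G'$ (attach $K_{2n}$, each vertex of $G$ joined to two distinct clique vertices), and the same lower bound $\hatphi(G')\ge \chi(G)$ via Lemma~\ref{claim:nphard}. The only divergence is the upper bound $\varphi(G')\le \chi(G)$: the paper gets this in one line from Lemma~\ref{lem:ind}, taking $H=K_{2n}$ so that $\varphi(G')\le \varphi(K_{2n})+\chi(G'\setminus K_{2n})=0+\chi(G)$, whereas you build an explicit coloring $c^{*}$ and bound the discrepancy of every induced subgraph directly. Your argument does work, but two remarks: first, the intermediate claim $\chi(H)\ge y$ (that $\chi(H)$ is at least the number of colors appearing on the $G$-part of $H$) is false in general — an independent set in $G$ can carry several colors — so the bound $|c^{*}(H)|-\chi(H)\le \min\{x,y\}$ is unjustified; fortunately you only use $|c^{*}(H)|-\chi(H)\le y\le\chi(G)$, which follows from $\chi(H)\ge \omega(H) \ge x$ alone, so the conclusion stands. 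Second, the effort spent keeping $c^{*}$ inside a common $2n$-color palette (the Hall-type completion of the clique coloring, and the computation of $\chi(G')$) is unnecessary: you may give the clique $2n$ brand-new colors, since your subgraph bound never uses optimality or the total number of colors of $c^{*}$; this simplification is essentially what Lemma~\ref{lem:ind} packages for the paper.
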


\begin{proof}
We show a reduction from the problem of computing $\chi(G)$.
Without loss of generality, let $G$ be a connected graph on $n\geq 2$ vertices. Consider a graph $G'$ which has a copy of $G$ and a copy of $K_{2n}$, and
each vertex in $G$ is adjacent to two distinct vertices in $K_{2n}$ (see Figure \ref{fig:nphard}).
In the following we show that
$\varphi(G^{\prime}) = \hatphi(G^{\prime}) = \chi(G)$, which implies the result.
From Proposition~\ref{lem:ind}, it follows that $\varphi(G^{\prime}) \leq \varphi(K_{2n}) + \chi(G) = \chi(G)$.
Consider any coloring $c$ of $G^{\prime}$. Let $G$ contain $t$ colors.
Since $G$ is connected, we have $2 \le \chi(G) \le t \le n$.
Using Lemma \ref{claim:nphard} we obtain a connected induced subgraph $H$ of $G'$
such that $\chi(H) = t$ and $H$ contain $2t$ colors.
It follows that $\hatphi_c(G') \ge |c(H)| - \chi(H) = t$.
Since $\hatphi_c(G') \ge t$ for any coloring $c$, we obtain that $\hatphi(G') \ge t$. Recalling that $\varphi(G') \le \chi(G)$, $\chi(G) \le t$ and $\hatphi(G') \le \varphi(G')$,  it follows that $\varphi(G') = \hatphi(G') = \chi(G)$.
\end{proof}

It is known from \cite{zuckerman2006linear} that for all $\epsilon > 0$, approximating chromatic number within a factor $|V|^{1-\epsilon}$ is NP-hard.
The two chromatic discrepancy parameters inherit this inapproximability
result due to the same reduction as that is used in the proof of Theorem~\ref{thm:nphard}.
\section{Open Problems}\label{open}
We list some of the related open problems here.

\begin{enumerate}
\item Can we obtain, for general graphs $G$, a lower bound for $\hatphi(G)$ as a function of $\varphi(G)$ alone? Specifically, is it true that $\hatphi(G)\geq \varphi(G)+1-\sqrt{\varphi(G)+1}$?
\item Consider the following decision problems pertaining to $\varphi(G)$ and $\hatphi(G)$:
$$ L_1  = \{ \langle G, k \rangle | \; G \mbox{ is a graph such that } \varphi(G) \leq k \}. $$
$$ L_2 = \{ \langle G, k \rangle | \; G \mbox{ is a graph such that } \hatphi(G) \leq k \}. $$
We saw that $L_1$ and $L_2$ are NP-hard. Are $L_1$ and $L_2$ in NP?
\item Are there well-known graph classes for which $\varphi(G)$ and $\hatphi(G)$ are polynomial time computable?
\item Is it true that $\varphi(G)\geq \chi(G)-\omega(G)$? We note that for Mycielski sequence of graphs, where $\omega(G) = 2$, this is tight.
\end{enumerate}

\section*{Acknowledgement}
We gratefully acknowledge Manu Basavaraju, L. Sunil Chandran,  Mathew C. Francis and Bheemarjuna Reddy Tamma for fruitful discussions.




\bibliographystyle{elsarticle-num}
\bibliography{cd}







\end{document}